\theoremstyle{definition}
\newtheorem*{thm*}{Theorem}
\newtheorem{thm}{Theorem}[section]
\newtheorem{cor}[thm]{Corollary}
\newtheorem{lem}[thm]{Lemma}
\newtheorem{prop}[thm]{Proposition}
\newtheorem{exam}[thm]{Example}
\newtheorem{rem}[thm]{Remark}
\newtheorem{defi}[thm]{Definition}
\numberwithin{equation}{section}
\def\refn#1.#2{\expandafter\def\csname#1\endcsname{[#2]}}
\def\refnr#1.{\csname#1\endcsname}
\begin{document}

\baselineskip  1.2pc

\title[Schatten class  Bergman-type and Szeg\"o-type operators ]
{Schatten class Bergman-type and Szeg\"o-type operators on  bounded symmetric domains}
\author{Lijia Ding }
\address{School of Mathematical Sciences,
Peking University, Beijing, 100086, P. R. China}
\email{ljding@pku.edu.cn}
%\author{Kai Wang }
%\address{School of Mathematical Sciences,
%Fudan University, Shanghai, 200433, P. R. China}
%\email{kwang@fudan.edu.cn}
%\author{Genkai Zhang }
%\address{Department of Mathematical Sciences,
%Chalmers and Gothenburg University, Gothenburg,41296, Sweden}
%\email{genkai@chalmers.se}
 \subjclass[2010]{ Primary  32M15;  Secondary 32A25; 47B10}
\keywords{Bergman kernel; Szeg\"o kernel;  Singular integral operator; Schatten class; Symmetric domain.}
\thanks{ The author was  partially supported by PCPSF (2020T130016). }
\begin{abstract} In this paper, we investigate  singular integral operators induced by the Bergman kernel and  Szeg\"o kernel on the irreducible bounded symmetric domain in its standard  Harish-Chandra
realization.   We completely  characterize when  Bergman-type operators and Szeg\"o-type operators belong to Schatten class operator ideals by several  analytic  numerical invariants of the bounded symmetric domain. These results generalize a recent result on the Hilbert unit ball  due to the author and his coauthor but also cover all irreducible  bounded symmetric domains. Moreover, we obtain two trace formulae and a new integral estimate  related to the Forelli-Rudin estimate.  The key ingredient of the proofs involves the function theory  on the bounded symmetric domain and the spectrum estimate of Bergman-type and  and Szeg\"o-type operators. %This work can be also viewed as an attempt to study the boundedness and compactness of classical integral operators without Forelli-Rudin estimate,  which is a long time open problem on the bounded symmetric domain.
%Let $\mathbb{B}^d$ be the complex unit ball in the complex Euclidian  space $\mathbb{C}^d$ with normalized Lebesgue measure $dv.$ For $\alpha\in\mathbb{R},$   the Bergman-type integral operators $B_{\alpha,\gamma},B_{\alpha,\gamma}^+$ on $L^1(\mathbb{B}^d,dv)$  is defined in \cite{CF,DiW}, which are induced by the Bergman kernel of $\mathbb{B}^d$.In this paper, we the $L^p$-$L^q$ compactness of $B_{\alpha,\gamma}^+,$  and prove  the $L^p$-$L^q$ compactness of operators $B_{\alpha,\gamma},B_{\alpha,\gamma}^+$ is  in fact equivalent.

\end{abstract}
\maketitle

\section{Introduction}
%This work can be viewed as an attempt to study the boundedness and compactness of classical integral operators without Forelli-Rudin estimate of Bergman kernel, this estimate is a long time open problem on the bounded symmetric domain with high rank and there are only partial results \cite{} until now.Let us start with recalling  some notations and terminologies.
 Let $\Omega$ be an irreducible bounded symmetric domain  in the standard Harish-Chandra realization, namely  $\Omega$ can be realized as the open unit ball
with respect to the so-called spectral norm in a finite dimensional complex vector space \cite{FK,Say}, thus we can identify $\Omega$ with the spectral unit ball. Denote  the normalized Lebesgue measure on  $\Omega$ by $dv$ which means that the volume measure of $\Omega$ is one. %By \cite{Zhu}, the domain  $\Omega$ is  uniquely determined in the biholomorphic sense by the following three analytic invariants $r,a$ and $b,$ where $r$ is the rank of  $\Omega$ and $a,b$ are the dimension of  subspace in the Peirce decomposion in the Jordan triple system with respect to the domain $\Omega.$
It is well known   \cite{FK} that  there exists a unique generic polynomial $h(z,w)$ in $z,\bar{w}$ and an analytic numerical invariant $N$ satisfying the Bergman kernel of  $\Omega$ is given by \begin{equation}\label{ker} K(z,w)=h(z,w)^{-N},\end{equation} where the invariant  $N$ is also  called the  genus of domain $\Omega.$

 Bergman-type operators are  singular integral operators induced by the (modified) Bergman kernel. In this paper, we  will mainly consider the Bergman-type integral operator in the following form. Denote  the $K$-invariant normalized measure $dv_\gamma$ on  $\Omega$ by $$dv_\gamma(w)=c_\gamma h(w,w)^{\gamma} dv(w)$$
 for $\gamma>-1,$  where $c_\gamma$ is the normalized constant and  $K$  is the isotropic subgroup in the identity
connected component of the  biholomorphic automorphism group of $\Omega.$
 \begin{defi}\label{defii}For $\alpha\in\mathbb{R}$ and $\gamma>-1,$ the Bergman-type integral operator $B_{\alpha,\gamma}$ on $L^1(dv_\gamma)$ is defined by
$$ B_{\alpha,\gamma} f(z)=\int _{\Omega}\frac{f(w)}{h(z,w)^\alpha} dv_\gamma(w),\quad z\in\Omega.$$
\end{defi}
Bergman-type integral operators play an important  role in complex analysis and operator theory. Note that $B_{N,0}$ is precisely  the standard Bergman projection on $L^2(dv).$  The boundedness of Bergman projection on $L^p(dv)$ for $1<p<\infty$ is a fundamental and open problem  on the bounded symmetric domain, which is only solved in the case of Hilbert unit ball \cite{BB,BGN,FR,Zhu3} that  is  the unit ball of rank one in the usual complex  Euclidian norm. Moreover, Bergman projection has a close relationship with  classical integral operators such as Toeplitz operators and  Hankel operators on Bergman space over  the bounded symmetric domain.  In \cite{Zhu1,Zhu2}, the  Bergman-type integral operator  was applied to  develop the holomorphic Besov function theory on the bounded symmetric domain, which was then used for  investigating  boundedness, compactness and Schatten membership \cite{Rin, Zhu4} of Hankel operators. In the case of Hilbert unit ball, the  Bergman-type integral operator can be used to study more holomorphic function spaces such as Bergman space and Bloch space, but also frequently applied to operator theory in function space \cite{ZZ,Zhu3,Zhu4}.
%where $\langle z,w \rangle=z_1\bar{w}_1+\cdots+z_d\bar{w}_d$ is the standard Hermitian inner product on  $\mathbb{C}^d.$ In particular, when $\alpha=d+1,$ $K_{d+1}$ is the standard Bergman projection over the unit ball $\mathbb{B}^d,$  since the function $\frac{1}{(1-\langle z,w\rangle)^{d+1}}$ is the standard Bergman kernel of $\mathbb{B}^d.$ However, the Bergman-type operator $B_{\alpha,\gamma}^+$  on $L^1(\mathbb{B}^d,dv),$ we mainly concerned in this paper,  is given by$$ B_{\alpha,\gamma}^+ f(z)=\int _{\mathbb{B}^d}\frac{f(w)}{|1-\langle z,w \rangle|^\alpha}dv(w).$$

 Recently, the boundedness, compactness and Schatten membership of Bergman-type, Volterra-type and Toeplitz-type operators  on  the Hilbert unit ball have attracted more attention \cite{CF,CH,Ding,DiW,KU,MP,PaP,Zha}. These researches more or less base on the Forelli-Rudin asymptotic estimate of Bergman kernel on the Hilbert unit ball \cite{FR,Zhu3}. However,  in the high rank case, the Forelli-Rudin asymptotic estimate is a long time unsolved problem \cite{EnZ,FK,Yan}. The first purpose of this paper is to characterize Schatten class Bergman-type operator $B_{\alpha,\gamma}$ on the bounded symmetric domain. To overcome the main difficulty that there is no complete Forelli-Rudin asymptotic estimate in the high rank case, the author and his coauthor provide an alternative approach based on the spectrum estimate in the case of Hilbert unit ball \cite{Ding}. In this paper we will systematically develop this approach on the bounded symmetric domain. More precisely, we introduce the notation of prediagonal operator
on abstract Hilbert space and reduce the Bergman-type operator $B_{\alpha,\gamma}$ to the prediagonal operator. In our case, the prediagonal operator is normal compact and takes its eigenvalues (counting multiplicities) as its characteristic. Then combing with  asymptotic estimates of Gindikin's Gamma functions \cite{FK} and dimension of irreducible polynomial spaces occur in the Peter-Schmid-Weyl decomposition \cite{FK,Up,UpW} follows our main results (Theorem \ref{thmf} and \ref{thms}).  As its application, we obtain two trace formulae (Proposition \ref{kss}), a new integral estimate  (Corollary \ref{FRe}) related to the Forelli-Rudin estimate on the bounded symmetric domain and a compactness result (Corollary \ref{Bcpt}) of the operator $B_{\alpha,\gamma}$ from $L^\infty(dv_\gamma)$ to $L^1(dv_\gamma).$  Similarly, we also consider the Schatten class Szeg\"o-type operators which are integral operators introduced by the Szeg\"o kernel.

 These results generalize  our recent result on the Hilbert unit ball \cite[Theorem 3]{Ding}, but also covers all irreducible  bounded symmetric domains. It is worth mentioning that the condition in our results  is necessary and  sharp.  This work can be also viewed as an attempt to study the boundedness and compactness of classical integral operators without Forelli-Rudin estimate on bounded symmetric domains. Since there exists an one-to-one corresponding  between semi-simple Lie algebras of Hermitian type and    Hermitian Jordan triple Systems \cite{Say,Up,UpW}, we believe that our work should be reproduced in term of Jordan triple Systems.%this estimate is a long time open problem on the bounded symmetric domain with high rank and there are only partial results \cite{} until now.Let us start with recalling  some notations and terminologies.
 %For any $\alpha>0,$ if restrict $B_{\alpha,\gamma}$ to the holomorphic function space $H(\mathbb{B}^d),$ then every $B_{\alpha,\gamma} $ is a spacial form of fractional radial differential operator $R^{s,t},$ which is a kind of very useful operators in the Bergman space theory on the unit ball \cite{ZZ}, see Lemma \ref{kr}; many key results on Bergman spaces can be deduced from the fractional radial differential operators, see example for \cite{ZZ,Zhu,Zhu1}. On the other hand, the operators $ B_{\alpha,\gamma}^+$ in complex analysis category, to some extent, are analogous to the Riesz potential  operators in real analysis.

 The paper is organized as follows. In Section 2, we give some preliminaries of the function theory on  bounded symmetric domains in term of the  Harish-Chandra
realization.  In section 3, we give and prove our main results on Bergman-type operators. In section 4, we prove two trace formulae  of the Bergman-type operator, as its  application, we  give an integral estimate related to the Forelli-Rudin estimate and  a compactness  result on  the Bergman-type operator $B_{\alpha,\gamma}.$ In section 5 we give our results on Szeg\"o-type operators.
%by using a criteria of precompactness in Lp space and the interpolation of compact
%operators. Section 3 is devoted to complete the proof of Theorem 2, which is based
%on the hypergeometric function theory and the fractional radial dierential operator
%theory. We also describe the phenomenon of Bergman-type operators for   0: In
%Section 4, we will characterize Schatten class Bergman-type operator K by estimates
%of the spectrum and techniques of operator theory.
 \section{ preliminaries}
 In this section, we will recall some basic results about the  bounded symmetric domain in its standard
standard Harish-Chandra realization without proofs, and the interested readers can consult \cite{FK,Hel,Say} for details.  Let $\Omega$ be an irreducible bounded symmetric domain in $\mathbb{C}^d.$ Let $G$ be the identity
connected component of the  biholomorphic automorphism group of $\Omega,$ %will be a simple real Lie
%algebra with Cartan decomposition g = f + p, f having non-trivial center. h
%will be a Cartan subalgebra of f (and hence also of g)g will be a simple real Lie
%algebra with Cartan decomposition g = f + p,.
  and its isotropic subgroup $K$ is given by
$$K := \{g \in G : g(o) = o\},$$
for any fixed $o\in \Omega.$
Then $\Omega =G\cdot o \approx G/K$. Denote the Lie  algebra of $G,K$ by  $\mathfrak{g,k}$ respectively. Then $\mathfrak{g}$ is semi-simple Lie algebra and $\mathfrak{k}$ has nontrivial center with the Cartan decomposition $$\mathfrak{g}=\mathfrak{k}\oplus\mathfrak{p}.$$
Chose a Cartan subalgebra $\mathfrak{h}$ of $\mathfrak{k},$ and it is also a Cartan subalgebra of  $\mathfrak{p}.$ Denote by  $\mathfrak{g}^\mathbb{C},\mathfrak{h}^\mathbb{C}$ the corresponding complexification of $\mathfrak{g},\mathfrak{h}$ respectively. A nonzero root $\gamma$ of $\mathfrak{g}^\mathbb{C}$ with respect to $\mathfrak{h}^\mathbb{C}$ is called compact if $\gamma\in\mathfrak{k}^\mathbb{C}$  and is called noncompact if $\gamma\in\mathfrak{p}^\mathbb{C}.$ Let $\theta$ be the conjugation with respect to $\mathfrak{k}\oplus i \mathfrak{p}$ and $\{e_\gamma\}$ be  a base of root vectors such that $$\theta e_\gamma=-e_{-\gamma},[e_\gamma,e_{-\gamma}]=h_\gamma,[h_\gamma,e_{\pm\gamma}]=\pm 2e_{\pm\gamma}.$$
Set $\mathfrak{p}^\pm=\sum_{\gamma} \mathbb{C}e_{\pm\gamma},$ thus we have the following canonical decomposition $$\mathfrak{g}^\mathbb{C}=\mathfrak{k}^\mathbb{C}\oplus\mathfrak{p}^+\oplus\mathfrak{p}^-.$$
Denote by $G^\mathbb{C}$  the adjointgroup of $g^\mathbb{C}.$ Let $G, K, K^\mathbb{C}, P^\pm$ be the analytic
subgroups corresponding to $\mathfrak{g}, \mathfrak{k}, \mathfrak{k}^\mathbb{C}, \mathfrak{p}^\pm, $ respectively. It follows from a  well-known lemma \cite[Lemma 4.2]{Say} that there exists a natural
(open) embedding:
$$ \Omega\approx G/K\approx GK^\mathbb{C}P^-/K^\mathbb{C}P^-\hookrightarrow P^+K^\mathbb{C}P^-/K^\mathbb{C}P^-\approx P^+\approx\mathfrak{p}^+.$$
Thus the embedding $\varphi:\Omega\rightarrow\mathfrak{p}^+$ is given by the following rule:
\begin{equation}\begin{split}
\Omega&\rightarrow\mathfrak{p}^+\notag\\
x=g\cdot o&\Leftrightarrow (\exp z)^{-1}g,
\end{split}\end{equation}
namely $\varphi$ maps $g$ to  the $K^\mathbb{C}P^-$-part of in the direct product decomposition
$P^+K^\mathbb{C}P^-.$
Due to Harish-Chandra, there exist $r$ linearly independent positive non-compact roots $\{\gamma_1,\cdots\gamma_r\}$  (relative to $\mathfrak{h}^\mathbb{C}$)
such that $\gamma_1>\gamma_2>\cdots>\gamma_r$ and $\gamma_i\pm\gamma_j$ is not a root whenever $i\neq j$ which are called strongly orthogonal roots, the positive integer $r$ is called the rank of domain $\Omega.$ Let $$e_j:=e_{\gamma_j},h_j:=h_{\gamma_j}$$ and $$e:=\sum_{j=1}^re_j.$$
For any $z\in\mathfrak{p}^+\approx\mathbb{C}^d,$ there exist $k\in K$ and $(\xi_1,\cdots,\xi_{r})\in \mathbb{R}^r$ such that
\begin{equation}\label{zexp}z=Ad(k)(\sum_{j=1}^r\xi_je_j),\end{equation}
and the $r$-tuple $(\xi_1,\cdots,\xi_r)$ is uniquely
determined by $z$ up to the order and the sign. The expression (\ref{zexp}) in fact induces the so-called spectral norm $\Vert \cdot\Vert_{sp}$ on $\mathfrak{p}^+\approx\mathbb{C}^d$ which is given by $$\Vert z\Vert_{sp}=\max_{1\leq j\leq r}\vert \xi_j\vert.$$
Indeed, it can be proved that $\varphi(\Omega)=\{z\in \mathbb{C}^d:\Vert z\Vert_{sp}<1\}.$ This is the so-called bounded  Harish-Chandra realization. Note that any two norm of the finite dimensional complex vector
space $\mathbb{C}^d$ are equivalent, thus the topologies induced by the spectral norm and the usual Euclidian norm coincide on $\mathbb{C}^d.$  Since the spectral unit ball in $\mathbb{C}^d$ is a bounded  balanced convex domain, the one-to-one correspondence between the bounded  balanced convex domains and Minkowski norm functions \cite {JaP} on $\mathbb{C}^d$ implies that the spectral norm  coincides with the  Minkowski function of the spectral unit ball.
%The Hermitian Jordan triple Z is a complex vector space endowed with a Jordan triple product\begin{equation}\begin{split}Z&\rightarrow Z\\u,v,w&\rightarrow\{u,v^\ast,w\}.\end{split}\end{equation} Then  $K=Aut(Z)$ is the linear group of all triple automorphisms of $Z.$An element $c \in Z$ such that $\{cc^\ast c\} =c$ is called a tripotent. Every tripotent induces a Peirce decomposition$$Z = Z_c^0\oplus Z_c^1\oplus  Z_c^2,$$where $Z_c^\gamma:=\{z\in Z:\{cc^\ast z\} =2\gammaz\}.$ Let $(z,w)$ denote the $K$-invariant inner product normalized by the condition $(c|c) =1.$

Denote by $c =\exp i(\frac{\pi}{4})(e - \theta e)$ the Cayley transform and by ${}^{c}\mathfrak{g}$ the Lie algebra of the group ${}^cG=cGc^{-1}$. Let $\mathfrak{h}^-$ be the real span of the vectors in $i\mathfrak{h},$ then $i\mathfrak{h}^-$ is a Cartan subalgebra of the pair $({}^{c}\mathfrak{g}, {}^{c}\mathfrak{k}),$ and $\pm\frac{1}{2}(\gamma_j\pm\gamma_k),\pm\gamma_j,\pm\frac{i}{2}\gamma_j$ are roots  with respective multiplicities $a, 1,2b$ of ${}^{c}\mathfrak{g}$ with respect to $i\mathfrak{h}^-,$ where $a,b$ are independent of indexes $j, k$. Indeed, the  nonnegative integer triple $(a,b,r)$ uniquely determines an irreducible bounded symmetric domain up to biholomorphism \cite{Zhu1}. It follows from the dimension count that
$$d=r+\frac{a}{2}r(r-1)+br.$$

The Hermitian inner product induced from the Killing form $B$ and the the conjugation $\theta$ on $\mathfrak{p}^+$ is defined by
$$(z , w) = - B(z, \theta w),$$
for $z,w\in\mathfrak{p}^+.$
Let $\mathcal{P}(\mathfrak{p}^+)$ be the set of all holomorphic polynomials on $\mathfrak{p}^+,$ endowed with the $K$-invariant Fischer-Fock inner product
\begin{equation}\label{ff}(p,q)=\int_{\mathfrak{p}^+}p(z)\overline{q(z)}e^{-(z,z)}dz,\end{equation} for all $p,q\in\mathcal{P}(\mathfrak{p}^+).$ By \cite{FK,UpW} the natural action of $K$ on $\mathcal{P}(\mathfrak{p}^+)$ induces the Peter-Schmid-Weyl decomposition
$$ \mathcal{P}(\mathfrak{p}^+)=\sum_{\bm{m}\geq0}\mathcal{P}_{\bm{m}}(\mathfrak{p}^+),$$
 where $\bm{m}=( m_1,\cdots , m_r)\geq0$
runs over all integer partitions, namely $$ m_1 \geq\cdots \geq m_r\geq 0.$$ The decomposition is irreducible under the action of $K$ and is  orthogonal under the  Fischer-Fock inner product.

Let $I_\Omega$ be the set of all integer partitions with length $r.$ Define
 \begin{equation}\begin{split}
 &I_\Omega(0)=\{\bm{0}\},\notag\\
 &I_\Omega(j)=\{(m_1,\cdots,m_j,0,\cdots,0):m_1\geq\cdots\geq m_j>0\},1\leq j\leq r-1,\\
 &I_\Omega(r)=\{(m_1,\cdots,m_r):m_1\geq\cdots\geq m_r>0\}.
 \end{split}\end{equation}
 It  obvious that \begin{equation}\label{inp}I_\Omega=\bigcup_{j=0}^r I_\Omega(j)\quad \text{and} \quad I_\Omega(i)\cap I_\Omega(j)=\emptyset\end{equation} whenever $i\neq j.$
%Choose a frame $\{e_1,\cdots, e_r\}$ of minimal tripotents, let $e:=e_1+\cdots+e_r$. The associated joint Peirce decomposition defines two numerical invariants $a,b$ of $Z $ such that
%$$\rho:= \frac{d}{r}=1+\frac{a}{2}(r-1)+b.$$ The genus $N$ of $Z$ is defined by $$N:=2+a(r-1)+b.$$
Let $ \mathfrak{a}^+$ be the real span of $\{e_1,\cdots\,e_r\},$ denote  the $K$-invariant polynomial $h$ on the real span $\mathfrak{a}^+\subset\mathfrak{p}^+$ by
$$h(\sum_{j=1}^rt_je_j)=\prod_{j=1}^r(1-t_j^2),$$
for real $r$-tuple $t=(t_1,\cdots,t_r).$ Note that $h$ is a real polynomial, thus it can be polarized, from \cite{FK}, we know that $$h(z,w)=\exp\sum z_j\frac{\partial}{\partial z_j}\exp\sum \bar{w}_j\frac{\partial}{\partial \bar{w}_j}h(z)$$ is still a polynomial in $z,\bar{w}.$ In particular, for $z$ in the form (\ref{zexp}), the following identity holds\begin{equation}\label{hxx} h(z,z)=\prod_{j=1}^r(1-\xi_j^2)\end{equation}  Denote by $A^2(dv_\gamma)$ the weighted Bergman space consisted of square integrable holomorphic functions with respect to the measure $dv_\gamma$ on $\Omega.$  The genus $N$ of $\Omega$ is defined by $$N:=2+a(r-1)+b,$$ then the Bergman kernel $K_\gamma(z,w)$ of the spectral unit ball is given by
$$ K_\gamma(z,w)=h(z,w)^{-(N+\gamma)},$$
which is degenerated to the formula (\ref{ker})  when $\gamma=0.$   The weighted Bergman space $A^2(dv_\gamma)$ is a reproducing kernel Hilbert function space, since  \begin{equation}\label{repf} f(z)=\langle f, K_{\gamma,z}\rangle_\gamma=\int_{\Omega}f(w)\overline{K_{\gamma,z}(w)}dv_\gamma(w),  \quad   z \in \Omega\end{equation}
for any $f\in A^2(dv_\gamma),$ where $$K_{\gamma,w}(z)= K_\gamma(z,w)=h(z,w)^{-(N+\gamma)}, \quad   z,w \in \Omega,$$  is also called the reproducing kernel of $A^2(dv_\gamma).$

For an integer   $r$-tuple $\bm{s}=(s_1,\cdots,s_r)\in \mathbb{Z}^r,$ the Gindikin's Gamma function \cite{FK} is given by $$\Gamma_\Omega(\bm{s})=(2\pi)^{\frac{ar(r-1)}{4}}\prod_{j=1}^r\Gamma(s_j-\frac{a}{2}(j-1)),$$
of the usual Gamma function $\Gamma$ whenever the right side is well defined. The multi-variable Pochhammer symbol \cite{FK,UpW} is
$$(\lambda)_{\bm{s}}=\frac{\Gamma_\Omega(\lambda+\bm{s})}{\Gamma_\Omega(\bm{s})}.$$
It can be verified that $$(\lambda)_{\bm{s}}= \prod_{j=1}^r(\lambda-\frac{a}{2}(j-1))_{s_j}$$ of the usual Pochhammer symbols $(\mu)_m=\prod_{j=1}^m(\mu+j-1).$
\section{Schatten class Bergman-type operators}
In this section, we will state and prove our main results on Bergman-type operators, which completely  characterize  the  Schatten class  Bergman-type operator on the  irreducible bounded symmetric domain.
Before stating  our main theoremes, we first introduce some notations involved. We define two sets $\mathscr{F}$ and $\mathscr{B}_\gamma$ of real numbers related to the bounded symmetric domain $\Omega:$
\begin{equation}\begin{split}
\mathscr{F}:&=\{f=\frac{a}{2}(l-1)-k:1\leq l\leq r,k\in\mathbb{N}\},\notag\\
\mathscr{B}_\gamma:&=\{s<N+\gamma:s\notin\mathscr{F}\},
 \end{split}\end{equation}
 where $\mathbb{N}$ is the set of all nonnegative integers. Note that $$\max\hspace{0.5mm} \mathscr{F}<\gamma+N.$$ It is easy to verity that $\alpha\in\mathscr{F}$ if and only if $(\alpha)_{\bm{m}}=0$ for all but finitely many integer partitions $\bm{m}\geq0.$

  Let $H$ be a separable Hilbert space,  denote  by $S_p(H)$ the Schatten $p$-class (or ideal) on $H$ where $0 < p<\infty.$ As we all known, Schatten classes are more refined classification of compact operators on Hilbert spaces, but also involve global estimates of the spectrum of compact operators. Moreover, $S_p(H)$ can be viewed as a noncommutative generalization of $\ell^p$ space; in particular, $S_p(H)$ will become a Banach space when provided a suitable norm for $1\leq p<\infty$. We refer the reader to  \cite{PaP,Rin,Zhu4} for details about  Schatten class on the Hilbert space. Now we can state our main results.

\begin{thm}\label{thmf} If $\alpha\in\mathscr{F},$ then the followings hold.
\begin{enumerate}
\item  The operator $B_{\alpha,\gamma}\in S_p(L^2(dv_\gamma))$ for any $p>0.$
\item   The operator $B_{\alpha,\gamma}\in S_p(A^2(dv_\gamma))$ for any $p>0.$
\end{enumerate}
\end{thm}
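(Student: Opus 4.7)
My plan is to reduce the hypothesis $\alpha \in \mathscr{F}$ directly to finite rank of $B_{\alpha,\gamma}$, from which (1) and (2) will follow at once since every finite-rank operator on a separable Hilbert space lies in $S_p$ for every $p > 0$. The argument sidesteps the spectral and Gamma-function estimates that will presumably drive the main case $\alpha \in \mathscr{B}_\gamma$; here the entire content of the theorem is loaded into the degeneracy of the Pochhammer symbol $(\alpha)_{\bm{m}}$ already noted in the excerpt.

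Concretely, I would start from the Faraut-Koranyi expansion of the kernel in the Peter-Schmid-Weyl decomposition. Writing $K^{\mathrm{F}}_{\bm{m}}$ for the reproducing kernel of the finite-dimensional irreducible component $\mathcal{P}_{\bm{m}}(\mathfrak{p}^+)$ in the Fischer-Fock inner product (\ref{ff}), one has
\begin{equation*}
h(z,w)^{-\alpha} = \sum_{\bm{m} \geq 0} (\alpha)_{\bm{m}}\, K^{\mathrm{F}}_{\bm{m}}(z,w).
\end{equation*}
Under the hypothesis $\alpha \in \mathscr{F}$, the observation recorded just before the theorem collapses this series to a \emph{finite} sum of polynomial reproducing kernels $K^{\mathrm{F}}_{\bm{m}}(z,w)$, so $h(z,w)^{-\alpha}$ is itself a polynomial in $z$ and $\bar w$, with degree in $z$ bounded by some integer $D$. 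For every $f \in L^2(dv_\gamma)$, the image $B_{\alpha,\gamma} f(z)$ is then a polynomial in $z$ of degree at most $D$, so the range of $B_{\alpha,\gamma}$ is contained in the finite-dimensional subspace $\bigoplus_{\bm{m}:\, (\alpha)_{\bm{m}} \neq 0} \mathcal{P}_{\bm{m}}(\mathfrak{p}^+)$. Hence $B_{\alpha,\gamma}$ is finite-rank on $L^2(dv_\gamma)$, and its restriction to the closed subspace $A^2(dv_\gamma)$ is finite-rank there as well, which yields (1) and (2) simultaneously.

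The only nontrivial point in this plan is the degeneracy equivalence $\alpha \in \mathscr{F} \Leftrightarrow (\alpha)_{\bm{m}} = 0$ for cofinitely many $\bm{m}$, which the author describes as a routine verification from the product formula $(\alpha)_{\bm{m}} = \prod_{j=1}^r (\alpha - \frac{a}{2}(j-1))_{m_j}$. Once this combinatorial claim is granted, the present case is purely algebraic: no asymptotics of Gindikin's Gamma function or of $\dim \mathcal{P}_{\bm{m}}$ enter, because the operator simply collapses to finite rank and its Schatten $p$-norms are automatically finite for every $p > 0$.
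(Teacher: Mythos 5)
Your strategy is the paper's own: the paper deduces Theorem \ref{thmf} from Lemma \ref{hol} together with Proposition \ref{fir} (finite rank if and only if $\alpha\in\mathscr{F}$), and Proposition \ref{fir} rests on the diagonal action (\ref{kf0}) plus the same pre-theorem observation you invoke. Your packaging --- the Faraut--Kor\'anyi series collapses to a finite sum of the polynomial kernels $K^{\bm{m}}(z,w)$, so the range of $B_{\alpha,\gamma}$ lies in $\bigoplus_{(\alpha)_{\bm{m}}\neq 0}\mathcal{P}_{\bm{m}}(\mathfrak{p}^+)$ --- is only a cosmetic variant of the paper's statement that the prediagonal characteristic $\{(\alpha)_{\bm{m}}/(N+\gamma)_{\bm{m}}\}$ has finitely many nonzero entries.

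The trouble is that the step you grant as routine is precisely where the argument (yours and the paper's) fails for most of $\mathscr{F}$ once $r\geq 2$. From $(\alpha)_{\bm{m}}=\prod_{j=1}^{r}(\alpha-\frac{a}{2}(j-1))_{m_j}$, the choice $\alpha=\frac{a}{2}(l-1)-k$ annihilates only the partitions with $m_l\geq k+1$. If $l\geq 2$ and $\alpha$ is not a nonpositive integer (for instance $\alpha=\frac{1}{2}$ in type III of rank $2$, or $\alpha=r-1$ in type I with $r\geq 2$), then for every $\bm{m}=(m_1,0,\dots,0)$ one has $(\alpha)_{\bm{m}}=(\alpha)_{m_1}\neq 0$: infinitely many terms survive, $h(z,w)^{-\alpha}$ is not a polynomial, and by (\ref{kf0}) the operator acts as a nonzero scalar on the infinitely many mutually orthogonal components $\mathcal{P}_{(m_1,0,\dots,0)}(\mathfrak{p}^+)$, hence is not finite rank. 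Cofinite vanishing of $(\alpha)_{\bm{m}}$ is in fact equivalent to $\alpha\in\{0,-1,-2,\dots\}$, i.e.\ to the $l=1$ slice of $\mathscr{F}$ (in rank one the two sets coincide, which is why the Hilbert-ball case is unaffected). Worse, for such surviving $\alpha$ the eigenvalues on $I_\Omega(1)$ satisfy $(\alpha)_{m_1}/(N+\gamma)_{m_1}\asymp m_1^{\alpha-N-\gamma}$ with multiplicity $\asymp m_1^{(r-1)a+b}=m_1^{N-2}$ by Lemma \ref{dime}, so the same count as in the proof of Theorem \ref{thms} shows $B_{\alpha,\gamma}\notin S_p$ whenever $p\leq\frac{N-1}{N+\gamma-\alpha}$, contradicting the conclusion you are aiming for. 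So your proposal actually establishes the theorem only for $\alpha\in-\mathbb{N}$; the rest of $\mathscr{F}$ cannot be reached by the finite-rank collapse. This defect is inherited from the paper's own pre-theorem claim (and affects Proposition \ref{fir} itself), but since you isolated that claim as the only nontrivial point and waved it through without checking the product formula, it is a genuine gap in the proposal as written.
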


\begin{thm}\label{thms} Suppose $ \alpha\in\mathscr{B}_\gamma $ and $0 < p<\infty,$  then the following statements are equivalent.
\begin{enumerate}
\item $B_{\alpha,\gamma}\in S_p(L^2(dv_\gamma)).$
\item $B_{\alpha,\gamma}\in S_p(A^2(dv_\gamma)).$
\item $\widetilde{B_{\alpha,\gamma}}\in L^p(d\lambda).$
\item $p>\frac{N-1}{N+\gamma-\alpha}.$
\end{enumerate}
 \end{thm}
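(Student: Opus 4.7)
The plan is to diagonalize $B_{\alpha,\gamma}$ on the weighted Bergman space $A^2(dv_\gamma)$ via the Peter-Schmid-Weyl decomposition and then translate the Schatten membership of the resulting ``prediagonal'' operator into a series of Pochhammer ratios weighted by dimensions of the irreducible $K$-components.

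First I would dispatch (1)$\Leftrightarrow$(2). Since $h(z,w)^{-\alpha}$ is anti-holomorphic in $w$, the function $\overline{h(z,\cdot)^{-\alpha}}$ lies in $A^2(dv_\gamma)$ for each fixed $z\in\Omega$, so for any $f\in L^2(dv_\gamma)\ominus A^2(dv_\gamma)$ the identity $B_{\alpha,\gamma}f(z)=\langle f,\overline{h(z,\cdot)^{-\alpha}}\rangle_\gamma=0$ shows that $B_{\alpha,\gamma}$ on $L^2(dv_\gamma)$ factors through $A^2(dv_\gamma)$ and has the same nonzero singular values. Next, using the Fischer-Fock expansion
$$h(z,w)^{-\alpha}=\sum_{\bm{m}\geq 0}(\alpha)_{\bm{m}}K_{\bm{m}}(z,w),$$
where $K_{\bm{m}}$ is the reproducing kernel of $\mathcal{P}_{\bm{m}}(\mathfrak{p}^+)$ in the Fischer-Fock inner product, together with the analogous formula for the Bergman kernel $h(z,w)^{-(N+\gamma)}$, I would show that each $\mathcal{P}_{\bm{m}}(\mathfrak{p}^+)$ embeds orthogonally into $A^2(dv_\gamma)$ and $B_{\alpha,\gamma}$ acts on it as the scalar
$$\lambda_{\bm{m}}=\frac{(\alpha)_{\bm{m}}}{(N+\gamma)_{\bm{m}}}.$$
Hence $B_{\alpha,\gamma}$ is a normal compact operator whose singular values are $|\lambda_{\bm{m}}|$ with multiplicity $d_{\bm{m}}:=\dim\mathcal{P}_{\bm{m}}(\mathfrak{p}^+)$, and (2) becomes equivalent to
$$\Sigma_p:=\sum_{\bm{m}\geq 0}d_{\bm{m}}|\lambda_{\bm{m}}|^p<\infty.$$

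The core analytic work is $\Sigma_p<\infty\Leftrightarrow(4)$. Via the factorization $(\lambda)_{\bm{m}}=\prod_{j=1}^r(\lambda-\frac{a}{2}(j-1))_{m_j}$ and Stirling, the hypothesis $\alpha\in\mathscr{B}_\gamma$ prevents any factor from vanishing and yields
$$|\lambda_{\bm{m}}|\asymp\prod_{j=1}^r(1+m_j)^{\alpha-(N+\gamma)}$$
once all $m_j$ are large, while the Peter-Schmid-Weyl dimension $d_{\bm{m}}$ is a known polynomial in $\bm{m}$ of total degree $d-r=\frac{a}{2}r(r-1)+br$. Summing over the Weyl chamber $m_1\geq\cdots\geq m_r\geq 0$ and stratifying by (\ref{inp}) to isolate boundary partitions with some $m_j=0$, the top stratum $I_\Omega(r)$ is dominant and produces the threshold $p(N+\gamma-\alpha)>N-1$. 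The delicate point here is verifying that each lower stratum $I_\Omega(j)$, which behaves like the corresponding sum on a rank-$j$ sub-domain, is subordinate to $I_\Omega(r)$ for exactly the same range of $p$; this requires the precise polynomial degree information for $d_{\bm{m}}$ coming from the Gindikin-type dimension formulae, and is where I expect the main technical obstacle to lie.

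Finally, for (2)$\Leftrightarrow$(3), I would compute the Berezin transform $\widetilde{B_{\alpha,\gamma}}(z)=\langle B_{\alpha,\gamma}k_{\gamma,z},k_{\gamma,z}\rangle_\gamma$ as an explicit series in the $K_{\bm{m}}$ using the diagonal decomposition just established; then $K$-invariance of $d\lambda$ together with the polar decomposition (\ref{zexp}) reduces the $L^p(d\lambda)$-norm of $\widetilde{B_{\alpha,\gamma}}$ to an integral of Forelli-Rudin type over the Weyl chamber, whose convergence matches $\Sigma_p<\infty$ under the same threshold condition, closing the equivalence.
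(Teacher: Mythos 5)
Your overall strategy is the same as the paper's: kill $(A^2(dv_\gamma))^\perp$, diagonalize $B_{\alpha,\gamma}$ on the Peter--Schmid--Weyl components with eigenvalues $\lambda_{\bm{m}}=(\alpha)_{\bm{m}}/(N+\gamma)_{\bm{m}}$, reduce Schatten membership to $\Sigma_p=\sum_{\bm{m}}d_{\bm{m}}|\lambda_{\bm{m}}|^p<\infty$ with $d_{\bm{m}}=\dim\mathcal{P}_{\bm{m}}(\mathfrak{p}^+)$, and handle (3) through the Berezin transform. Two remarks on where your plan deviates or is softer than what is actually needed. First, your localization of the difficulty is off: the top stratum $I_\Omega(r)$ is \emph{not} where the sharp exponent comes from. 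In the region where all $m_j$ are comparably large (and the gaps are generic), one has $d_{\bm{m}}\approx\prod_j m_j^{b}\prod_{i<j}(m_i-m_j)^{a}$, and a dyadic count shows that this bulk converges as soon as $p(N+\gamma-\alpha)>1+\tfrac{a}{2}(r-1)+b$, which is strictly weaker than the stated threshold $N-1=1+a(r-1)+b$ when $r\geq2$, $a>0$. The binding contribution comes from partitions with a \emph{single} large part, i.e.\ the stratum $I_\Omega(1)$ (equivalently the corner of $I_\Omega(r)$ with $m_2,\dots,m_r$ bounded), where $d_{\bm{m}}$ grows like $m_1^{(r-1)a+b}$. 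Moreover, knowing only that $d_{\bm{m}}$ is ``a polynomial of total degree $d-r$'' is not enough: for sufficiency you need the per-variable upper bound $d_{\bm{m}}\leq C\prod_{j=1}^k m_j^{(r-j)a+b}$ on each $I_\Omega(k)$ (so that each variable's decay $p(N+\gamma-\alpha)$ beats its own dimension exponent and the chamber sum factors into one-dimensional series), and for necessity you need a \emph{two-sided} estimate $d_{(m_1,0,\dots,0)}\asymp m_1^{(r-1)a+b}$ to bound $\Sigma_p$ from below along $I_\Omega(1)$. This is exactly the content of the paper's Lemma \ref{dime}, and without the lower bound your argument proves only sufficiency of (4), not its necessity; with it, the ``subordination'' question you worry about disappears, since all strata converge under the single condition $p(N+\gamma-\alpha)-(r-1)a-b>1$.

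Second, for (2)$\Leftrightarrow$(3) you do not need a series expansion of the Berezin transform or a chamber-integral comparison with $\Sigma_p$: since $B_{\alpha,\gamma}K_{\gamma,z}(w)=h(w,z)^{-\alpha}$ by the reproducing property, one gets the closed form $\widetilde{B_{\alpha,\gamma}}(z)=h(z,z)^{N+\gamma-\alpha}$, and then $\widetilde{B_{\alpha,\gamma}}\in L^p(d\lambda)$ reduces to $\int_\Omega h(z,z)^{p(N+\gamma-\alpha)-N}dv<\infty$, which holds iff the exponent exceeds $-1$, i.e.\ iff (4). (If you insist on the series route, it resums to the same closed form via the Faraut--Kor\'anyi expansion, so nothing is lost, but the detour through chamber integrals is unnecessary.) Also note that your use of $\alpha\in\mathscr{B}_\gamma$ is in the right place: $\alpha\notin\mathscr{F}$ is what makes every Gamma-factor nonvanishing so that $|\lambda_{\bm{m}}|\asymp\prod_j(1+m_j)^{-(N+\gamma-\alpha)}$ uniformly, and $\alpha<N+\gamma$ is what makes these ratios tend to $0$; without it the operator degenerates to finite rank (Theorem \ref{thmf}) and the equivalence with (4) fails.
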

Where  $\widetilde{B_{\alpha,\gamma}}$ is the Berezin transform of $B_{\alpha,\gamma}$ on Bergman space $A^2(dv_\gamma)$ and $d\lambda$ is the M\"obius invariant measure on $\Omega,$ the exact definition will be given  later. The condition  $ \alpha\in\mathscr{B}_\gamma $ in Theorem \ref{thms} is necessary and sharp.
\begin{exam} For a moment, let us make our results more precisely. Due to E. Cartan \cite{Car}, there only exist six type  irreducible bounded symmetric domains up to biholomorphism, and the first four  types are also called  classical  domains. For convenience, we list their  standard Harish-Chandra realization  as follows \cite[Chapter 17]{Loo}.
\vspace{3.5mm}
\renewcommand\arraystretch{1.8}
\begin{center}$\begin{array}{|c|c|c|c|c|c|c|}\hline \text{Type}& \mathfrak{p}^+ & d & a & b & r & N \\\hline
\text{\uppercase\expandafter{\romannumeral1}}&\mathbb{C}^{1\times 1}& 1& 1&  0& 1 & 2\\\hline

\text{\uppercase\expandafter{\romannumeral1}}&\mathbb{C}^{r\times s}, r\leq s& rs\geq2& 2 &  s\!-\!r& r & r\!+\!s \\\hline

\text{\uppercase\expandafter{\romannumeral2}}&\mathbb{C}^{(2r+\epsilon) \times (2r+\epsilon)}_{asym} & r(2r\!+\!2\epsilon\!-\!1)\!\geq\!5& 4 & 2\epsilon &r & 4r\!+\!2\epsilon\!-\!2   \\\hline

\text{\uppercase\expandafter{\romannumeral3}}&\mathbb{C}^{r\times r}_{sym} &\frac{r(r+1)}{2} \! \geq \!2 & 1 & 0 & r & r\!+\!1  \\\hline

\text{\uppercase\expandafter{\romannumeral4}}&\mathbb{C}^{s\times s}_{spin} &s \!\geq\!4& s\!-\!2 & 0 & 2 & s  \\\hline

\text{\uppercase\expandafter{\romannumeral5}}&\mathbb{O}_{\mathbb{C}}^{1\times2}& 16 & 6& 4 & 2& 12 \\\hline

\text{\uppercase\expandafter{\romannumeral6}}& \mathcal{H}_3^{\mathbb{C}}(\mathbb{O})\bigotimes\mathbb{C}& 27 & 8 & 0 & 3& 18   \\\hline \end{array}$\\\end{center}
 \end{exam}

 \vspace{3.5mm}
 Where $\epsilon=0,1.$
 Thus, by direct calculation, the above Theorem \ref{thms}  can be exactly restated case-by-case  in the following table.

%\begin{tabular}{|c|c|c|}\caption{table}\hline0 & 0 & 0 \\\hline 0 & 0 & 0 \\\hline 0 & 0 & 0\\\hline\end{tabular}\label{table:nonlin}

 \vspace{2.5mm}
\renewcommand\arraystretch{1.8}
\setlength{\arraycolsep}{2.5pt}
\begin{center}$\begin{array}{|c|c|c|c|}\hline \text{Type}& \mathscr{F}& \mathscr{B}_\gamma&B_{\alpha,\gamma}\in S_p\\\hline

\text{\uppercase\expandafter{\romannumeral1}}& \bigcup_{k=0}^\infty\{r\!-\!1\!-\!k\}  &  (-\infty,r\!+\!s)\!\setminus \! \bigcup_{k=0}^\infty\{r\!-\!1\!-\!k\} &p>\frac{r+s -1}{r+s+\gamma-\alpha }\\\hline

\text{\uppercase\expandafter{\romannumeral2}}&\bigcup_{k=0}^\infty\{2r\!-\!2\!-\!k\} &  (-\infty,4r\!+\!2\epsilon\!-\!2)\!\setminus\! \bigcup_{k=0}^\infty\{2r\!-\!2\!-\!k\} &p>\frac{4r+2\epsilon-3}{4r+2\epsilon-2+\gamma -\alpha } \\\hline

\text{\uppercase\expandafter{\romannumeral3}}& \bigcup_{k=0}^\infty\{\frac{r-1}{2}\!-\!k,\frac{r-2}{2}\!-\!k\} & (-\infty,r\!+\!1)\!\setminus\!  \bigcup_{k=0}^\infty\{\frac{r-1}{2}\!-\!k,\frac{r-2}{2}\!-\!k\} &p>\frac{r}{r+1+\gamma-\alpha }\\\hline

\text{\uppercase\expandafter{\romannumeral4}}&\bigcup_{k=0}^\infty\{\frac{s-2}{2}\!-\!k,- k\} & (-\infty,s)\!\setminus\!  \bigcup_{k=0}^\infty\{\frac{s-2}{2}-\!k,- k\} &p>\frac{s-1}{s+\gamma-\alpha }\\\hline

\text{\uppercase\expandafter{\romannumeral5}}&\bigcup_{k=0}^\infty\{3\!- \!k\}& (-\infty,12)\!\setminus\!\bigcup_{k=0}^\infty\{3\!- \!k\}&p>\frac{11}{12+\gamma-\alpha }\\\hline

\text{\uppercase\expandafter{\romannumeral6}}&\bigcup_{k=0}^\infty\{8\!- \!k\}& (-\infty,18)\!\setminus\! \bigcup_{k=0}^\infty\{8\!- \!k\}&p>\frac{17}{18+\gamma-\alpha } \\\hline \end{array}$\\\end{center}

%\begin{center}$\begin{array}{|c|c|c|}\hline   & \mathscr{F} & \mathscr{B}_\gamma \\
%\mathbb{C}^{r\times s} & \cup_k\{r-1-k\} & (-\infty,r+s)\setminus \cup_k\{r-1-k\} \\
%\mathbb{C}^{(2r+\epsilon) \times (2r+\epsilon)}_{asym} &  \cup_k\{2r-2-k\}&  (-\infty,4r+2\epsilon-2 )\setminus  \cup_k\{2r-2-k\}\\
%\mathbb{C}^{r\times r}_{sym} &  \cup_k\{\frac{1}{2}(r\!-\!1)\!-\!k\} & (-\infty,r\!+\!1 )\setminus  \cup_k\{\frac{1}{2}(r\!-\!1)\!-\!k\}\\
%\mathbb{C}^{s\times s}_{spin}& s-2 & 0  \\
%O_{\mathcal{C}}^{1\times2} & 6& 4  \\
% \mathcal{H}_3^C(O) & 18 & 0 \\\hline \end{array}$\\\end{center}
\vspace{2.9mm}
Now we turn to prove the main theorems. We first establish  some lemmas.  The following lemma gives the regularity of the image of $B_{\alpha,\gamma}.$
\begin{lem}\label{hol} For any $ \alpha\in\mathbb{R}$ and $ \gamma>-1,$  then $B_{\alpha,\gamma} f$ is holomorphic on $\Omega$ for any $f\in L^1(dv_\gamma).$
\end{lem}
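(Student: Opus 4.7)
The plan is to derive the lemma from the standard Morera--Fubini principle, after establishing a uniform bound on the integration kernel. The first task will be to show that for every compact subset $E \subset \Omega$ there is a constant $C_E > 0$ with
$$\sup_{z \in E,\ w \in \Omega} |h(z,w)|^{-\alpha} \leq C_E.$$
Since $h(z,w)$ is a polynomial in $z$ and $\bar w$, continuity of $h$ on the compact set $\bar E \times \bar\Omega$ handles the case $\alpha \leq 0$ immediately. For $\alpha > 0$ the estimate reduces, by compactness, to the non-vanishing property $h(z,w) \neq 0$ on $\Omega \times \bar\Omega$, a classical fact for bounded symmetric domains (the analogue of $|1 - \langle z, w\rangle| \geq 1 - \|z\| > 0$ in the unit ball case) that follows from the spectral representation (\ref{zexp}) together with identity (\ref{hxx}); see \cite{FK}.

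Next, since $\Omega \times \Omega$ is simply connected and $h$ is non-vanishing there, I would fix the single-valued continuous branch of $\log h(z,w)$ normalized by $\log h(z,z) \in \mathbb{R}$ (valid because $h(z,z) > 0$ by (\ref{hxx})), and define $h(z,w)^{-\alpha} := \exp(-\alpha \log h(z,w))$. Then $z \mapsto h(z,w)^{-\alpha}$ is holomorphic on $\Omega$ for every fixed $w$, and its modulus equals $|h(z,w)|^{-\alpha}$. The uniform bound yields
$$|B_{\alpha,\gamma} f(z)| \leq C_E \|f\|_{L^1(dv_\gamma)} < \infty$$
for $z \in E$, so $B_{\alpha,\gamma} f$ is pointwise defined and locally bounded on $\Omega$. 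For any closed triangle $T$ contained in a complex line parallel to a coordinate axis and lying inside $\Omega$, Fubini (justified by the bound) gives
$$\oint_T B_{\alpha,\gamma} f(z)\, dz_j = \int_\Omega f(w) \oint_T h(z,w)^{-\alpha}\, dz_j\, dv_\gamma(w) = 0,$$
the inner integral vanishing by Cauchy's theorem. Morera's theorem then yields separate holomorphicity in each coordinate, and Hartogs' theorem promotes this to joint holomorphicity of $B_{\alpha,\gamma} f$ on $\Omega$.

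The only non-routine point is the non-vanishing of $h$ on $\Omega \times \bar\Omega$. In the unit ball this is the elementary inequality recalled above; in the general bounded symmetric domain case the cleanest justification invokes the Jordan-triple or spectral structure. Everything else is a direct consequence of standard complex-analysis machinery once the uniform bound is in hand.
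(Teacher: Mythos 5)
Your argument is correct, but it follows a genuinely different route from the paper. The paper's proof invokes the Faraut--Kor\'anyi binomial expansion \cite[Theorem 3.8]{FK}, $h(z,w)^{-\alpha}=\sum_{\bm{m}\geq0}(\alpha)_{\bm{m}}K^{\bm{m}}(z,w)$, which converges uniformly and absolutely on $B_r\times\overline{\Omega}$; after dominated convergence this exhibits $B_{\alpha,\gamma}f$ on each spectral ball $B_r$ as a locally uniform limit of holomorphic polynomials, and the same computation (\ref{Kfz}) is then reused to prove the operator identity (\ref{oid}) and the prediagonal structure in Lemmas \ref{bksd} and \ref{poisp}, so that route buys more than holomorphy. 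You instead use only the uniform lower bound $|h(z,w)|\geq\delta_E>0$ on $\bar E\times\overline{\Omega}$, a fixed branch of $\log h$ (legitimate, since $\Omega\times\Omega$ is convex and $h(z,z)>0$), and then Fubini--Morera plus Hartogs; this is more elementary and avoids the expansion entirely. The one place where you lean on an unproved ingredient is the non-vanishing of $h(z,w)$ on $\Omega\times\overline{\Omega}$: the spectral decomposition (\ref{zexp}) and the diagonal identity (\ref{hxx}) by themselves only control $h(z,z)$ and do not give non-vanishing off the diagonal, so this step really does rest on the cited fact from the Jordan-triple/Faraut--Kor\'anyi theory (the same fact that underlies the paper's use of $h^{-\alpha}$ on $B_r\times\overline{\Omega}$); with that citation accepted, your proof is complete.
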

\begin{proof} It is sufficient to show that $B_{\alpha,\gamma} f$ is holomorphic on every point
of $\Omega.$ Suppose $z_0$ is an arbitrary point in $\Omega.$ Let $\Vert\cdot \Vert_{sp}$ be the spectral norm of $\Omega.$ Chose a real $r$ satisfying $\Vert z_0 \Vert_{sp}<r<1.$ Then $z_0\in B_r,$ where $ B_r=\{\Vert z\Vert_{sp}<r\}$ is an open ball with radius $r>0$ in the spectral norm. It follows from \cite[Theorem 3.8]{FK} that
$$h(z,w)^{-\alpha}=\sum_{\bm{m}\geq0}(\alpha)_{\bm{m}}K^{\bm{m}}(z,w)$$
converges uniformly and absolutely on  $ B_r\times \overline{\Omega},$ where $K^{\bm{m}}$ is the reproducing kernel of $\mathcal{P}_{\bm{m}}(\mathfrak{p}^+)$ in the Fischer-Fock inner product (\ref{ff}). Thus the dominated convergence theorem implies that
\begin{equation}\begin{split}\label{Kfz}
B_{\alpha,\gamma} f(z)&=\int_{\Omega}\frac{f(w)}{h(z,w)^\alpha}dv_\gamma(w)\\
             &=\int_{\Omega}f(w)\sum_{\bm{m}\geq0}(\alpha)_{\bm{m}}K^{\bm{m}}(z,w)dv_\gamma(w)\\
             &=\sum_{\bm{m}\geq0}(\alpha)_{\bm{m}}\int_{\Omega}f(w)K^{\bm{m}}(z,w)dv_\gamma(w),
\end{split}\end{equation}
for any $z\in B_r.$ Note that $\int_{\Omega}f(w)K^{\bm{m}}(z,w)dv_\gamma(w)$ is a holomorphic polynomial in $z$ for each $\bm{m}\geq0.$ Combing this with (\ref{Kfz}) follows that  $B_{\alpha,\gamma} f$ is holomorphic on the spectral ball $B_r.$ It leads to the desired result.
\end{proof}
 Recall that a bounded operator on a Hilbert space is called a finite rank operator if the range of the operator has finite dimension. Obviously, finite rank operators must be compact and  belong to every Schatten $p$-class with $0<p<\infty.$
 \begin{defi}Let $H$ be a separable Hilbert
space, a linear operator $T:H\rightarrow H$ is called prediagonal if there exist  an  orthogonal basis $\{e_n\}$ of $H$ and a complex number sequence $\{\lambda_n\}$ such that \begin{equation}\label{diag}  T(\sum f_ne_n)=\sum\lambda_n f_ne_n\end{equation}  whenever $\sum f_n e_n\in H$ and $\sum\lambda_n f_ne_n\in H.$
\end{defi}
In particular, $$Te_n=\lambda_n e_n$$ for each $n,$ thus the operator $T$ is densely defined.  In the following, we will frequently say that an operator $T$ is a prediagonal operator with the characteristic $\{\lambda_n\}$  if there exist  an  orthogonal basis $\{e_n\}$ and a sequence $\{\lambda_n\}$ satisfying (\ref{diag}). Note that a closed operator $T$ is  prediagonal  if and only if $T^\ast$ is prediagonal, where $T^\ast$ is the adjoint of $T;$ moreover, if  $\{\lambda_n\}$ is  the characteristic of $T$ with respect to the  orthogonal basis $\{e_n\},$ then $\{\bar{\lambda}_n\}$  is  the characteristic of $T^\ast$ and $$T^\ast e_n=\bar{\lambda}_n e_n$$ for each $n.$
%\begin{lem}Let  $T:H\rightarrow H$ be a diagonal operator with the characteristic $\{\lambda_n\},$ if  $\{\lambda_n\}\in \ell^\infty$ then the linear extension of $T$ is  unique and bounded.
%\end{lem}
%\begin{proof}
%\end{proof}
When $T$ is a bounded prediagonal operator,  it can be proved that  $T$ must be normal and  the  characteristic of $T$ is  unique up to the order and the  characteristic is nothing but the sequence of eigenvalues (counting geometric multiplicities). If $dim \hspace{0.2mm}H<\infty,$ then $H$ can be isometrically embedded an infinite dimensional Hilbert space, and $T$ will be identified with its natural zero extension on the infinite dimensional Hilbert space. Moreover, in the finite dimensional case, the prediagonal operator is just a matrix in the unitary equivalent class of some diagonal matrix; in the infinite dimensional case, all normal compact operators are prediagonal as above.  The following lemma gives some criteria for boundedness, compactness  and Schatten membership of prediagonal operators by using its characteristic.% In fact we have the following lemma. It is easy to verify by Parseval equality that  the  diagonal operator $T$ with the characteristic $\{\lambda_n\}$ is bounded if and only if $\{\lambda_n\}\in \ell^\infty.$
\begin{lem}\label{dgf}Let $T$ be a prediagonal operator with characteristic $\{\lambda_n\}$ as above, then the following hold.

\begin{enumerate}
\item $T$ is bounded if and only if $\{\lambda_n\}\in \ell^\infty.$
\item $T$ is compact if and only if $\{\lambda_n\}\in c_0.$
\item $T \in S_p(H)$  if and only if $\{\lambda_n\}\in \ell^p$  for $0<p<\infty.$
\end{enumerate}
\end{lem}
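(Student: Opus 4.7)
The plan is to identify $T$ with a multiplication operator on $\ell^2$ and then read off each of (1)--(3) from the corresponding fact for such operators. After rescaling the given basis to an orthonormal one, which does not affect the characteristic $\{\lambda_n\}$, the unitary $U\colon H\to\ell^2$ sending $e_n/\|e_n\|$ to the $n$-th standard basis vector conjugates $T$ to the multiplication operator $M_\lambda\colon(f_n)\mapsto(\lambda_n f_n)$ on its natural domain in $\ell^2$. Every spectral and operator-ideal property of $T$ then matches the corresponding property of $M_\lambda$.

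For (1) I would use $|\lambda_n|=\|Te_n\|/\|e_n\|\leq\|T\|$ for the forward direction, and Parseval for the converse: if $\{\lambda_n\}\in\ell^\infty$ then $\|Tx\|\leq\|\{\lambda_n\}\|_\infty\|x\|$ on the dense domain, extending $T$ to a bounded operator unitarily equivalent to $M_\lambda$ with norm $\|\{\lambda_n\}\|_\infty$. For (2), compactness of $T$ together with the fact that $e_n/\|e_n\|$ tends weakly to zero forces $Te_n\to 0$ in norm, so $\lambda_n\to 0$; conversely, if $\lambda_n\to 0$ the truncations $T_N$ with characteristic $(\lambda_1,\dots,\lambda_N,0,\dots)$ are finite-rank and satisfy $\|T-T_N\|=\sup_{n>N}|\lambda_n|\to 0$ by (1).

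For (3) the key input is the observation, already made in the paragraph preceding the statement, that a bounded prediagonal operator is automatically normal with eigenvalue sequence equal to its characteristic. Once this is granted, the singular values of $T$ are the moduli $\{|\lambda_n|\}$ rearranged in decreasing order, and Schatten $p$-membership is by definition $p$-summability of singular values, so $T\in S_p(H)\iff\{\lambda_n\}\in\ell^p$. The main delicate point is not any single computation but the normality of bounded prediagonal operators underpinning (3); since the surrounding text of the excerpt handles this separately, no serious obstacle remains in the proof of the lemma itself.
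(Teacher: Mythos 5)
Your proof is correct and takes essentially the same route as the paper: parts (1) and (2) are argued identically (Parseval for boundedness, weak-to-norm convergence of the normalized basis vectors plus finite-rank truncations for compactness), and part (3) is the same reduction to a compact normal operator diagonalized by the given basis. The only difference is that where you cite as standard that the singular values are then the moduli $\vert\lambda_n\vert$ arranged decreasingly, the paper writes out the eigenspace and multiplicity bookkeeping (showing the characteristic exhausts the point spectrum with the correct geometric multiplicities) before passing to $\vert T\vert$; your multiplication-operator model on $\ell^2$ renders that step immediate, so there is no gap.
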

\begin{proof}(1) It is  obvious that if $T$ is bounded then $\{\lambda_n\}\in \ell^\infty.$  Conversely, for any $f=\sum f_n e_n\in H,$ since $\{\lambda_n\}\in \ell^\infty,$ Parseval equality implies that $$ \Vert \sum \lambda_nf_ne_n\Vert\leq\Vert \{\lambda_n\}\Vert_\infty\Vert f\Vert.$$ Thus  $$\Vert T f\Vert = \Vert \sum \lambda_nf_ne_n\Vert\leq\Vert \{\lambda_n\}\Vert_\infty\Vert f\Vert.$$ Since $f$ is arbitrary, it implies that $T$ is bounded.

(2) Suppose that $T$ is compact. Note that $e_n\rightarrow0$ weakly as $n\rightarrow0.$  Combing with the well-known fact that a compact
operator maps a weakly convergent sequence into a strongly convergent one, we thus obtain that $$\lim \vert\lambda_n\vert=\lim \Vert Te_n\Vert=0,$$
namely $\{\lambda_n\}\in c_0.$ Conversely, suppose  $\{\lambda_n\}\in c_0.$ By (1) we know that $T$ is bounded. Define a finite rank operator sequence $\{T_k\}$ on $H$ by
$$T_k f=\sum_{j=1}^k\lambda_j\langle f, e_j\rangle e_j,$$ for any $f=\sum f_n e_n\in H.$ Since  $\{\lambda_n\}\in c_0,$ it is easy to see that $T_k\rightarrow T$ in the operator norm. Thus $T$ is compact.

(3) It suffices to consider the case that $T$ is a normal compact operator. Denote the point spectrum of $T$ by $\sigma(T).$ Then $\{\lambda_n:n\in\mathbb{N}\}\subset\sigma(T)$ as set. If there exists $\lambda\in\sigma(T)$ but $\lambda\notin\{\lambda_n\},$ then there exists a nonzero $e\in H$ satisfying $Te=\lambda e$ and $e\bot\{e_n\}.$ Note that $\{e_n\}$ is an orthogonal basis of $H$, it implies that $e=0,$ a contradiction with $e\neq0.$ Thus $\{\lambda_n\}=\sigma(T)$ as set. Now suppose  $\lambda'\in\{\lambda_n\}=\sigma(T).$ Note that $M(\lambda')\leq dim\hspace{0.5mm} Ker(\lambda'-T),$ which is the geometric multiplicity of the nonzero eigenvalue $\lambda'.$ Then the multiplicity $M(\lambda')$ of $\lambda'$ in $\{\lambda_n\}$ must be finite. We turn to prove that $M(\lambda')=dim\hspace{0.5mm} Ker(\lambda'-T).$   If $M(\lambda')<dim\hspace{0.5mm} Ker(\lambda'-T),$ then there exists a nonzero $e'\in H$ such that $Te'=\lambda' e'$ and $e'\bot\{e_n\},$ this is a  contradiction since  $\{e_n\}$ is an orthogonal basis. It prove that $M(\lambda')=dim\hspace{0.5mm} Ker(\lambda'-T).$ Thus $\{\lambda_n\}=\sigma(T)$ as set and the multiplicity $M(\lambda_n)$  coincides with its  geometric multiplicity for any nonzero $\lambda_n.$ On the other hand, $T$ is normal compact, it follows from the spectral theorem and functional calculus for compact normal operators that  $\{\vert \lambda_n\vert \}=\sigma(\vert T\vert)$ as set and the multiplicity $M(\vert \lambda_n\vert)$  coincides with its  geometric multiplicity for any nonzero $\vert \lambda_n\vert.$ Thus $T \in S_p(H)$  if and only if $\{\lambda_n\}\in \ell^p$  for any $0<p<\infty.$
\end{proof}
%denote the Schatten p-class (or ideal ) on H by Sp(H); where 0 < p  1; we refer the reader to [17, 21, 25, 26] for more details about Schatten class operators
\begin{lem}\label{bksd} The operator $B_{\alpha,\gamma}:L^2(dv_\gamma)\rightarrow L^2(dv_\gamma)$ is bounded if and only if $\alpha\leq N+\gamma.$
\end{lem}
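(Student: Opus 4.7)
The plan is to reduce $L^2$-boundedness of $B_{\alpha,\gamma}$ to $A^2$-boundedness of its restriction, diagonalize that restriction via the Peter-Schmid-Weyl decomposition, and then read off the condition $\alpha \leq N+\gamma$ from the asymptotics of Gindikin's Pochhammer symbols.

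For the reduction, I would establish the factorization $B_{\alpha,\gamma} = B_{\alpha,\gamma}|_{A^2(dv_\gamma)} \circ P$ on $L^2(dv_\gamma)$, where $P$ is the orthogonal projection onto $A^2(dv_\gamma)$. Lemma \ref{hol} makes $A^2(dv_\gamma)$ an invariant subspace. For $g \in L^2(dv_\gamma)$ with $g \perp A^2(dv_\gamma)$, I would fix $z$ in an open spectral ball $B_r$ with $r<1$ and use the uniform absolute convergence of $h(z,w)^{-\alpha} = \sum_{\bm{m}\geq 0}(\alpha)_{\bm{m}}K^{\bm{m}}(z,w)$ on $B_r \times \overline{\Omega}$ (from \cite[Theorem 3.8]{FK}) together with $g \in L^1(dv_\gamma)$ to swap sum and integral:
\[
B_{\alpha,\gamma}g(z) = \sum_{\bm{m}\geq 0}(\alpha)_{\bm{m}} \int_\Omega g(w)K^{\bm{m}}(z,w)\,dv_\gamma(w).
\]
Each inner integral equals $\langle g, K^{\bm{m}}(\cdot, z)\rangle_\gamma$ after invoking the Hermitian symmetry $\overline{K^{\bm{m}}(z,w)} = K^{\bm{m}}(w,z)$, and vanishes since $K^{\bm{m}}(\cdot, z) \in \mathcal{P}_{\bm{m}}(\mathfrak{p}^+) \subset A^2(dv_\gamma)$. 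Hence $B_{\alpha,\gamma}g \equiv 0$; since $P$ is a contraction on $L^2(dv_\gamma)$, $L^2$-boundedness of $B_{\alpha,\gamma}$ is equivalent to $A^2$-boundedness of its restriction.

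Next I would diagonalize $B_{\alpha,\gamma}|_{A^2(dv_\gamma)}$ against $A^2(dv_\gamma) = \bigoplus_{\bm{m}\geq 0}\mathcal{P}_{\bm{m}}(\mathfrak{p}^+)$. Matching the Faraut-Koranyi expansion of the reproducing kernel $h(z,w)^{-(N+\gamma)} = \sum_{\bm{m}}(N+\gamma)_{\bm{m}}K^{\bm{m}}(z,w)$ against the orthogonal decomposition of the $A^2(dv_\gamma)$-reproducing kernel identifies $(N+\gamma)_{\bm{m}}K^{\bm{m}}$ as the reproducing kernel of $\mathcal{P}_{\bm{m}}(\mathfrak{p}^+)$ in $A^2(dv_\gamma)$. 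A second use of the expansion then yields, for $p \in \mathcal{P}_{\bm{m}}(\mathfrak{p}^+)$,
\[
B_{\alpha,\gamma} p = \frac{(\alpha)_{\bm{m}}}{(N+\gamma)_{\bm{m}}}\,p.
\]
Thus $B_{\alpha,\gamma}|_{A^2(dv_\gamma)}$ is prediagonal with characteristic $\lambda_{\bm{m}} = (\alpha)_{\bm{m}}/(N+\gamma)_{\bm{m}}$, and Lemma \ref{dgf}(1) reduces boundedness to the condition $\{\lambda_{\bm{m}}\}\in\ell^\infty$.

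Finally I would analyze $\sup_{\bm{m}}|\lambda_{\bm{m}}|$ through the product formula
\[
\lambda_{\bm{m}} = \prod_{j=1}^r \frac{(\alpha - \tfrac{a}{2}(j-1))_{m_j}}{(N+\gamma - \tfrac{a}{2}(j-1))_{m_j}}
\]
and the Stirling asymptotic $(x)_m/(y)_m \sim m^{x-y}$ as $m\to\infty$. Each factor is $O(m_j^{\alpha - N - \gamma})$ uniformly in $\bm{m}$, so $\alpha \leq N+\gamma$ gives a uniform bound (subsuming the case $\alpha \in \mathscr{F}$, where only finitely many $(\alpha)_{\bm{m}}$ are nonzero and $\max\mathscr{F} < N+\gamma$). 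Conversely, if $\alpha > N+\gamma$, specializing to $\bm{m} = (m,0,\ldots,0)$ and letting $m\to\infty$ forces the $j=1$ factor to blow up like $m^{\alpha - N - \gamma}$, ruling out $\{\lambda_{\bm{m}}\}\in\ell^\infty$. The main obstacle is the series-integral interchange in the first step; its justification rests squarely on the uniform absolute convergence region $B_r \times \overline{\Omega}$ provided by \cite[Theorem 3.8]{FK}, after which the diagonalization and Stirling-based bound are essentially routine.
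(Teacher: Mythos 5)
Your proposal is correct and follows essentially the same route as the paper: kill the orthogonal complement of $A^2(dv_\gamma)$ via the Faraut--Kor\'anyi expansion of $h(z,w)^{-\alpha}$ to reduce to the Bergman space, exhibit $B_{\alpha,\gamma}$ as a prediagonal operator with characteristic $\{(\alpha)_{\bm{m}}/(N+\gamma)_{\bm{m}}\}$, and conclude by Stirling asymptotics of the Pochhammer ratios. The only cosmetic difference is in the diagonalization step, where you identify $(N+\gamma)_{\bm{m}}K^{\bm{m}}$ as the reproducing kernel of $\mathcal{P}_{\bm{m}}(\mathfrak{p}^+)$ in $A^2(dv_\gamma)$ instead of invoking Schur's lemma together with the $K$-integration formula as the paper does; both rest on the same fact from \cite[Corollary 3.7]{FK}.
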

\begin{proof} Clearly that $B_{N+\gamma,\gamma}:L^2(dv_\gamma)\rightarrow A^2(dv_\gamma)$ is the Bergman orthogonal projection and is bounded. In what follows, we show that the operator identity \begin{equation}\label{oid}B_{\alpha,\gamma}B_{N+\gamma,\gamma} =B_{\alpha,\gamma}\end{equation} holds on $ L^2(dv_\gamma)$ for any $\alpha\in \mathbb{R}.$ It suffices to show that $$B_{\alpha,\gamma}(Id-B_{N+\gamma,\gamma})=0$$ on $L^2(dv_\gamma).$
 Since Bergman space $A^2(dv_\gamma)$ is a closed subspace of the Hilbert space $L^2(dv_\gamma),$
 together with the orthogonal projection theorem, it suffices to show that
  \begin{equation}\label{ka0} B_{\alpha,\gamma}|_{(A^2(dv_\gamma))^\bot}=0,\end{equation} where $(A^2(dv_\gamma))^\bot$ is the orthogonal complement of Bergman space $A^2(dv_\gamma)$ in $ L^2(dv_\gamma).$ Suppose $f\in (A^2(dv_\gamma))^\bot,$ namely $f\in L^2(dv_\gamma)$ and $\langle f,g\rangle=0$ for any $g\in A^2(dv_\gamma);$ in particular, \begin{equation}\label{fp}\langle f,P\rangle=0\end{equation} for any holomorphic polynomial $P\in \mathcal{P}(\mathfrak{p}^+)$. Now we prove that $B_{\alpha,\gamma} f=0$ for any $f\in (A^2(dv_\gamma))^\bot\subset L^2(dv_\gamma).$ It implies from Lemma \ref{hol} that $B_{\alpha,\gamma} f$ is holomorphic on $\Omega.$ By the uniqueness theorem of holomorphic  functions, it suffices to show that $B_{\alpha,\gamma} f=0$ on a  spectral ball $B_r$ with $0<r<1.$
Then (\ref{Kfz}) and (\ref{fp}) yield  that
\begin{equation}\begin{split}\label{kfff}
B_{\alpha,\gamma} f(z)&=\int_{\Omega}\frac{f(w)}{h(z,w)^\alpha}dv_\gamma(w)\\
                                &=\sum_{\bm{m}\geq0}(\alpha)_{\bm{m}}\int_{\Omega}f(w)K^{\bm{m}}(z,w)dv_\gamma(w)\\
                                &=\sum_{\bm{m}\geq0}(\alpha)_{\bm{m}}\langle f,K^{\bm{m}}_z\rangle_\gamma\\
                          & =0,
 \end{split}\end{equation}
for any $z\in B_r.$ The last equality in (\ref{kfff}) holds, since  $K^{\bm{m}}_z\in \mathcal{P}(\mathfrak{p}^+)$ for any fixed $z\in B_r$ and $\bm{m}\geq0.$ Thus the operator identity (\ref{oid}) holds. Combing Lemma \ref{hol} and  the operator identity (\ref{oid}) yields that $B_{\alpha,\gamma}:L^2(dv_\gamma)\rightarrow L^2(dv_\gamma)$ is bounded if and only if $B_{\alpha,\gamma}:A^2(dv_\gamma)\rightarrow A^2(dv_\gamma)$ is bounded. In the following, we prove that $B_{\alpha,\gamma}:A^2(dv_\gamma)\rightarrow A^2(dv_\gamma)$ is a prediagonal operator. Suppose $f=\sum_{\bm{m}\geq0}f_{\bm{m}}\in A^2(dv_\gamma)$ with the Peter-Schmid-Wely decomposition of $A^2(dv_\gamma).$ By    Schur's lemma,
     the ratio of any two $K$-invariant inner
 products is a constant on the irreducible space $\mathcal{P}_{\bm{m}}(\mathfrak{p}^+),$  in fact, by \cite[Corollary 3.7]{FK} we know that \begin{equation}\label{bff}\frac{\langle p, q\rangle_\gamma}{(p,q)}=\frac{1}{(N+\gamma)_{\bm{m}}},\end{equation}  for any $p,q\in \mathcal{P}_{\bm{m}}(\mathfrak{p}^+).$ Since $K$ is a compact group, it follows that the orbit $K\cdot z\subset \Omega$ is compact for any $z\in \Omega.$ Then, combing this with the
 integral formula (1.11) in \cite{FK} implies that
 \begin{equation}\begin{split}\label{kf0}
B_{\alpha,\gamma} f(z)%&=\int_{\Omega}\frac{f(w)}{h(z,w)^\alpha}dv(w)\notag\\
                                &=\sum_{\bm{m}\geq0}(\alpha)_{\bm{m}}\int_{\Omega}\sum_{\bm{l}\geq0}f_{\bm{l}}(w) K^{\bm{m}}(z,w)dv_\gamma(w)\\
                                &=\sum_{\bm{m}\geq0}c_\gamma \cdot(\alpha)_{\bm{m}}\int_0^1\cdots\int_0^1\prod t_j^{2b+1}(1-t_j^2)^\gamma \prod_{l<k}\vert t_l^2-t_k^2\vert^a dt_1\cdots dt_r\\
                                &\hspace{1cm}\times\int_{K}\sum_{\bm{n}\geq0}f_{\bm{n}}(g\sum_{j=1}^rt_je_j) K^{\bm{m}}(z,g\sum_{j=1}^rt_je_j)dg\\
                                &=\sum_{\bm{m}\geq0}c_\gamma c\cdot(\alpha)_{\bm{m}}\int_0^1\cdots\int_0^1\prod t_j^{2b+1}(1-t_j^2)^\gamma\prod_{l<k}\vert t_l^2-t_k^2\vert^adt_1\cdots dt_r\\
                                &\hspace{1cm}\times\sum_{\bm{n}\geq0}\int_{K}f_{\bm{n}}(g\sum_{j=1}^rt_je_j) K^{\bm{m}}(z,g\sum_{j=1}^rt_je_j)dg\\
                                &=\sum_{\bm{m}\geq0}c_\gamma c\cdot(\alpha)_{\bm{m}}\int_0^1\cdots\int_0^1\prod t_j^{2b+1}(1-t_j^2)^\gamma\prod_{l<k}\vert t_l^2-t_k^2\vert^adt_1\cdots dt_r\\
                                &\hspace{1cm}\times\int_{K}f_{\bm{m}}(g\sum_{j=1}^rt_je_j) K^{\bm{m}}(z,g\sum_{j=1}^rt_je_j)dg\\
                                &=\sum_{\bm{m}\geq0}(\alpha)_{\bm{m}}\langle f_{\bm{m}}, K^{\bm{m}}_z\rangle_\gamma\\
                                &=\sum_{\bm{m}\geq0}\frac{(\alpha)_{\bm{m}}}{(N+\gamma)_{\bm{m}}}f_{\bm{m}}(z),\\
 \end{split}\end{equation}
 where $c$ is an integral constant.
It implies from (\ref{kf0}) that $B_{\alpha,\gamma}$ is a prediagonal operator with characteristic $\{\frac{(\alpha)_{\bm{m}}}{(N+\gamma)_{\bm{m}}}\}$ on $A^2(dv_\gamma),$ where of cause counting  multiplicities. Thus by Lemma \ref{dgf}, $ B_{\alpha,\gamma}$ is bounded on $A^2(dv_\gamma)$ if and only if  $\{\frac{(\alpha)_{\bm{m}}}{(N+\gamma)_{\bm{m}}}\}\in \ell^\infty.$ By the definition of the multi-variable Pochhammer symbol, we obtain that
  \begin{equation}\begin{split}\label{poc}
  \frac{(\alpha)_{\bm{m}}}{(N+\gamma)_{\bm{m}}}&=\frac{\Gamma_\Omega(\alpha+\bm{m})}{\Gamma_\Omega(N+\gamma+\bm{m})}\frac{\Gamma_\Omega(N+\gamma)}{\Gamma_\Omega(\alpha)}\\
                                        &=\frac{\Gamma_\Omega(N+\gamma)}{\Gamma_\Omega(\alpha)}\prod_{j=1}^r\frac{\Gamma(\alpha+m_j-(j-1)\frac{a}{2})}{\Gamma(N+\gamma+m_j-(j-1)\frac{a}{2})}.
   \end{split}\end{equation}
   Then Stirling's formula and (\ref{poc}) imply that  \begin{equation}\label{asyy}\left\vert\frac{(\alpha)_{\bm{m}}}{(N+\gamma)_{\bm{m}}}\right\vert\sim \prod_{j=1}^k\frac{1}{m_j^{N+\gamma-\alpha}},\end{equation} as $I_\Omega(k)\ni\bm{m}\rightarrow\infty,$ for $1\leq k\leq r.$
   % \hspace{0.5cm}\bm{m}\rightarrow\infty. $$
 The notation $A(\bm{m})\sim B(\bm{m})$ as $I_\Omega(k)\ni\bm{m}\rightarrow\infty$
 means that the ratio $\frac{A(\bm{m})}{B(\bm{m})}$ has a positive finite limit as $$m_1\rightarrow \infty,\cdots,m_k\rightarrow \infty$$  for $1\leq k\leq r.$ Consequently, $\{\frac{(\alpha)_{\bm{m}}}{(N+\gamma)_{\bm{m}}}\}\in \ell^\infty$ if and only if $\alpha\leq N+\gamma.$ It finishes the proof.
\end{proof}
\begin{rem} In the case of Hilbert unit ball, the operator identity  (\ref{oid}) holds on $L^p(dv_\gamma)$ for any $1<p<\infty,$ since the  Bergman projection $B_{N,\gamma}$ is bounded on $L^p(dv_\gamma)$ for any $1<p<\infty;$ see \cite{CF,DiW}. However, in the high rank case, the boundedness of   Bergman projection $B_{N+\gamma,\gamma}$ on $L^p(dv_\gamma)$ for all $1<p<\infty$ is not valid; see \cite{BB,BGN}.
\end{rem}

We have proved that $B_{\alpha,\gamma}:A^2(dv_\gamma)\rightarrow A^2(dv_\gamma)$ is a prediagonal operator. Note that  $A^2(dv_\gamma)$ is a closed subspace of the Hilbert space $L^2(dv_\gamma)$ and $B_{\alpha,\gamma}(A^2(dv_\gamma))^\bot=0,$ it follows that $B_{\alpha,\gamma}:L^2(dv_\gamma)\rightarrow L^2(dv_\gamma)$ is prediagonal. In Definition \ref{defii}, $B_{\alpha,\gamma}$ is only defined on $L^1(dv_\gamma)$; however,  $B_{\alpha,\gamma}$ can be defined on $L^1(dv_\beta)$ for any $\beta>-1.$ Denote by $H_\lambda\subset L^2(dv_\beta)$ the solution space of the following linear integral equation
 $$ \lambda f(z)-\frac{c_\gamma}{c_\beta}\int_{\Omega}h(z,z)^{\gamma-\beta}h(z,w)^{-\alpha}f(w)dv_\beta(w)=0,$$ for parameter $\lambda\in\mathbb{C}.$  It is  obvious that $H_\lambda\bot H_\mu$ if $\lambda\neq\mu,$ and there only exist countably many $\lambda\in\mathbb{C}$ satisfying $H_\lambda\neq\emptyset.$
 Then Fubini's Theorem and the previous discussion of prediagonality between a closed operator and its adjoint operator imply the following.

\begin{cor} The operator  $B_{\alpha,\gamma}:L^2(dv_\beta)\rightarrow L^2(dv_\beta)$ is prediagonal if and only if $$L^2(dv_\beta)=\bigoplus_{\lambda\in\mathbb{C}}H_\lambda.$$
\end{cor}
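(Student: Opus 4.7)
The plan is to recognise the corollary as the concrete realization, for the operator $B_{\alpha,\gamma}$ on $L^2(dv_\beta)$, of the abstract fact that a (closed) operator $T$ on a separable Hilbert space $H$ is prediagonal if and only if $H$ decomposes as an orthogonal Hilbert sum of the eigenspaces of $T$. In our setting the eigenspace of $B_{\alpha,\gamma}$ at eigenvalue $\lambda$ is precisely $H_\lambda$ as defined by the displayed integral equation (after rewriting $dv_\gamma=(c_\gamma/c_\beta)h(\cdot,\cdot)^{\gamma-\beta}dv_\beta$), so the corollary amounts to checking both halves of that abstract equivalence.

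For the sufficiency, suppose $L^2(dv_\beta)=\bigoplus_{\lambda\in\mathbb{C}}H_\lambda$. Only countably many $H_\lambda$ are nonzero and each is a separable Hilbert subspace, so I would pick an orthonormal basis $\{e_{\lambda,j}\}_j$ inside every nonzero $H_\lambda$ and let $\{e_n\}$ denote the union, which is an orthonormal basis of $L^2(dv_\beta)$. By definition $B_{\alpha,\gamma}e_{\lambda,j}=\lambda e_{\lambda,j}$, so for any $f=\sum_n f_ne_n\in L^2(dv_\beta)$ with $\sum_n\lambda_n f_ne_n\in L^2(dv_\beta)$ one obtains $B_{\alpha,\gamma}f=\sum_n\lambda_n f_ne_n$, which is exactly the defining relation of prediagonality.

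For the necessity, suppose $B_{\alpha,\gamma}$ is prediagonal with orthogonal basis $\{e_n\}$ and characteristic $\{\lambda_n\}$. Group the basis vectors by their eigenvalue and set $V_\lambda=\overline{\operatorname{span}}\{e_n:\lambda_n=\lambda\}$. By the basis property $L^2(dv_\beta)=\bigoplus_\lambda V_\lambda$, and trivially $V_\lambda\subseteq H_\lambda$. To upgrade this inclusion to equality, I would take any $g\in H_\lambda$ and expand $g=\sum_\mu g_\mu$ with $g_\mu\in V_\mu\subseteq H_\mu$; then $g-g_\lambda\in H_\lambda$ is simultaneously orthogonal to every $H_\mu$ with $\mu\neq\lambda$, hence vanishes, giving $g=g_\lambda\in V_\lambda$. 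Consequently $V_\lambda=H_\lambda$ and the claimed orthogonal decomposition follows.

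The only non-mechanical ingredient is the orthogonality $H_\lambda\perp H_\mu$ for $\lambda\neq\mu$, which I expect to be the main obstacle and which is exactly where Fubini's theorem enters. Concretely, Fubini identifies the Hilbert-space adjoint of $B_{\alpha,\gamma}$ on $L^2(dv_\beta)$ with the integral operator whose kernel is the complex conjugate transpose of the kernel of $B_{\alpha,\gamma}$; combined with the earlier observation that $T$ is prediagonal exactly when $T^\ast$ is, and with conjugated characteristic, this forces the eigenspace of $B_{\alpha,\gamma}^\ast$ at $\bar\mu$ to contain $H_\mu$. The standard pairing $\langle B_{\alpha,\gamma}f,g\rangle=\langle f,B_{\alpha,\gamma}^\ast g\rangle$ with $f\in H_\lambda$ and $g\in H_\mu$ then yields $(\lambda-\mu)\langle f,g\rangle=0$, giving the required orthogonality. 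Once the adjoint identification is secured via Fubini, the rest is purely Hilbert-space bookkeeping.
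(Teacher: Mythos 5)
Your overall strategy (reduce to the abstract statement that prediagonality is equivalent to an orthogonal decomposition into eigenspaces, with Fubini's theorem brought in to control the adjoint) is exactly the route the paper indicates in its one-sentence justification, but there is a genuine gap at the crux of your necessity direction. The step ``this forces the eigenspace of $B_{\alpha,\gamma}^\ast$ at $\bar\mu$ to contain $H_\mu$'' does not follow from the two facts you cite: prediagonality of $B_{\alpha,\gamma}^\ast$ with conjugated characteristic only describes its action on the basis vectors $e_n$ and on the closed spans $V_\mu=\overline{\operatorname{span}}\{e_n:\lambda_n=\mu\}$, whereas the containment $H_\mu\subseteq\ker(B_{\alpha,\gamma}^\ast-\bar\mu)$ is, in your setup, equivalent to $H_\mu=V_\mu$ --- which is precisely what the necessity direction must prove. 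As written, the argument is circular. The repair is to pair an eigenvector directly with the basis: for $g\in H_\mu$, Fubini's theorem justifies $\mu\langle g,e_n\rangle_\beta=\langle B_{\alpha,\gamma}g,e_n\rangle_\beta=\langle g,B_{\alpha,\gamma}^\ast e_n\rangle_\beta=\lambda_n\langle g,e_n\rangle_\beta$ (here $\langle\cdot,\cdot\rangle_\beta$ is the inner product of $L^2(dv_\beta)$ and $B_{\alpha,\gamma}^\ast e_n=\bar\lambda_ne_n$ comes from the paper's discussion of the adjoint of a prediagonal operator), so $\langle g,e_n\rangle_\beta=0$ whenever $\lambda_n\neq\mu$, i.e. $H_\mu=V_\mu$; the mutual orthogonality of the $H_\lambda$ then falls out and your separate orthogonality step becomes superfluous. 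Note that the orthogonality really must be extracted from the prediagonality hypothesis and cannot be taken for granted: for $\beta\neq\gamma$ the eigenspaces of $B_{\alpha,\gamma}$ in $L^2(dv_\beta)$ need not be orthogonal at all (already on the unit disc with $\gamma=0$, $\beta=1$, $\alpha=1$, the function $1-2\vert z\vert^2$ lies in $H_0$ but is not $L^2(dv_1)$-orthogonal to the constant eigenfunction in $H_1$), so your instinct that this is the main obstacle is correct, which makes the circular justification the decisive defect.

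A second, smaller gap is in the sufficiency direction: from $B_{\alpha,\gamma}e_{\lambda,j}=\lambda e_{\lambda,j}$ you cannot conclude ``by definition'' that $B_{\alpha,\gamma}\bigl(\sum_n f_ne_n\bigr)=\sum_n\lambda_nf_ne_n$ for infinite sums. The operator $B_{\alpha,\gamma}$ is a concrete integral operator which is in general unbounded (indeed only densely defined) on $L^2(dv_\beta)$, so linearity handles finite sums only; verifying the defining relation of prediagonality for an arbitrary $f=\sum_n f_ne_n$ with $\sum_n\lambda_nf_ne_n\in L^2(dv_\beta)$ requires interchanging the integral with the series, e.g. by testing $B_{\alpha,\gamma}f$ against the basis vectors via the same Fubini/adjoint pairing as above. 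This is the other place where the Fubini argument the paper alludes to is actually needed, and your proposal omits it.
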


Combing  the definition of $\mathscr{F}$ with (\ref{kf0}) yields the following conclusion.
 \begin{prop}\label{fir}The operator $B_{\alpha,\gamma}:L^2(dv_\gamma)\rightarrow L^2(dv_\gamma)$ is finite rank operator if and only if $\alpha\in\mathscr{F}.$
 \end{prop}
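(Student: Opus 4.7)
The plan is to read off Proposition~\ref{fir} directly from the spectral picture already developed inside the proof of Lemma~\ref{bksd}. Two ingredients do all the work: first, the identity (\ref{ka0}) shows that $B_{\alpha,\gamma}$ vanishes on $(A^2(dv_\gamma))^\perp$; second, the computation (\ref{kf0}) shows that $B_{\alpha,\gamma}$ acts on each irreducible Peter-Schmid-Weyl component $\mathcal{P}_{\bm{m}}(\mathfrak{p}^+)\subset A^2(dv_\gamma)$ as multiplication by the scalar $\frac{(\alpha)_{\bm{m}}}{(N+\gamma)_{\bm{m}}}$. Together these reduce the finite-rank question to a purely combinatorial count on partitions.

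Concretely, by the first ingredient the range of $B_{\alpha,\gamma}:L^2(dv_\gamma)\rightarrow L^2(dv_\gamma)$ coincides with the range of its restriction to $A^2(dv_\gamma)$, so $B_{\alpha,\gamma}$ is of finite rank on $L^2(dv_\gamma)$ if and only if it is of finite rank on $A^2(dv_\gamma)$. Using the orthogonal Peter-Schmid-Weyl decomposition $A^2(dv_\gamma)=\bigoplus_{\bm{m}\geq 0}\mathcal{P}_{\bm{m}}(\mathfrak{p}^+)$ together with the second ingredient, the range on $A^2(dv_\gamma)$ equals $\bigoplus_{\bm{m}:(\alpha)_{\bm{m}}\neq 0}\mathcal{P}_{\bm{m}}(\mathfrak{p}^+)$; here I use that $(N+\gamma)_{\bm{m}}$ never vanishes, which is immediate from $\gamma>-1$ and $N=2+a(r-1)+b$, since every factor $N+\gamma-\frac{a}{2}(j-1)$ in the multi-variable Pochhammer expression is strictly positive. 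Because each $\mathcal{P}_{\bm{m}}(\mathfrak{p}^+)$ is nonzero and finite-dimensional, the dimension of this direct sum equals $\sum_{\bm{m}:(\alpha)_{\bm{m}}\neq 0}\dim\mathcal{P}_{\bm{m}}(\mathfrak{p}^+)$, which is finite if and only if the index set $\{\bm{m}\geq 0:(\alpha)_{\bm{m}}\neq 0\}$ is finite.

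The final step invokes the equivalence noted immediately after the definition of $\mathscr{F}$, namely that $\alpha\in\mathscr{F}$ holds if and only if $(\alpha)_{\bm{m}}=0$ for all but finitely many integer partitions $\bm{m}\geq 0$. Combining this with the previous paragraph yields the desired equivalence "$B_{\alpha,\gamma}$ is of finite rank on $L^2(dv_\gamma)$ $\Longleftrightarrow\alpha\in\mathscr{F}$," completing the proof. No substantial analytic obstacle appears here: once Lemma~\ref{bksd} has furnished the diagonal form of $B_{\alpha,\gamma}$ on $A^2(dv_\gamma)$, the argument is linear-algebraic, and the only minor verifications to record are the positivity of $(N+\gamma)_{\bm{m}}$ and the Pochhammer description of $\mathscr{F}$ stated right after its definition.
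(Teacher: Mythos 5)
Your proposal is correct and follows essentially the same route as the paper: the paper deduces the proposition directly from the diagonal action in (\ref{kf0}) (eigenvalue $\frac{(\alpha)_{\bm{m}}}{(N+\gamma)_{\bm{m}}}$ on each $\mathcal{P}_{\bm{m}}(\mathfrak{p}^+)$, with $(N+\gamma)_{\bm{m}}\neq 0$) together with the remark that $\alpha\in\mathscr{F}$ iff $(\alpha)_{\bm{m}}=0$ for all but finitely many partitions, which is exactly your argument with the reduction to $A^2(dv_\gamma)$ via (\ref{ka0}) made explicit. The only cosmetic point is that in the infinite-rank case the range need not literally equal the direct sum you write, but containment of infinitely many mutually orthogonal nonzero subspaces is all your dimension count uses, so nothing is affected.
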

% \begin{proof} It comes from (\ref{Kfz}) and the definition of the multi-variable Pochhammer symbol.
 %\end{proof}
It is easy to see that Theorem \ref{thmf} is a direct corollary of Lemma \ref{hol} and  Proposition \ref{fir}. Denote by $\sigma(B_{\alpha,\gamma},A^2(dv_\gamma))$ the point spectrum (the collections of eigenvalues) of $B_{\alpha,\gamma}$ on the Bergman
space $A^2(dv_\gamma).$
\begin{lem}\label{poisp} For  $\alpha\in\mathbb{R},$ then $$\sigma(B_{\alpha,\gamma},A^2(dv_\gamma))=\left\{\frac{(\alpha)_{\bm{m}}}{(N+\gamma)_{\bm{m}}}:\bm{m}\geq0\right\}.$$
\end{lem}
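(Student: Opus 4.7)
The key observation is that essentially all the work for this lemma has already been done in the proof of Lemma \ref{bksd}. Indeed, the computation culminating in equation (\ref{kf0}) shows that, via the Peter--Schmid--Weyl decomposition $A^2(dv_\gamma) = \bigoplus_{\bm{m}\geq 0} \mathcal{P}_{\bm{m}}(\mathfrak{p}^+)$, the operator $B_{\alpha,\gamma}$ acts on the isotypic component $\mathcal{P}_{\bm{m}}(\mathfrak{p}^+)$ as the scalar $\frac{(\alpha)_{\bm{m}}}{(N+\gamma)_{\bm{m}}}\cdot \mathrm{Id}$. In other words, $B_{\alpha,\gamma}$ is prediagonal on $A^2(dv_\gamma)$ with characteristic $\bigl\{\tfrac{(\alpha)_{\bm{m}}}{(N+\gamma)_{\bm{m}}}\bigr\}_{\bm{m}\geq 0}$. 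The plan is to simply read off the point spectrum from this diagonalization.

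For the inclusion $\bigl\{\tfrac{(\alpha)_{\bm{m}}}{(N+\gamma)_{\bm{m}}}:\bm{m}\geq 0\bigr\}\subseteq \sigma(B_{\alpha,\gamma},A^2(dv_\gamma))$, I would pick any nonzero polynomial $p\in\mathcal{P}_{\bm{m}}(\mathfrak{p}^+)$ (which is nonempty as an irreducible $K$-module and lies in $A^2(dv_\gamma)$ since it is a polynomial on the bounded domain $\Omega$), and note that (\ref{kf0}) directly gives $B_{\alpha,\gamma}p = \tfrac{(\alpha)_{\bm{m}}}{(N+\gamma)_{\bm{m}}} p$. Hence each such value is an eigenvalue, including $0$ whenever $(\alpha)_{\bm{m}} = 0$ (which happens for infinitely many $\bm{m}$ precisely when $\alpha\in\mathscr{F}$).

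For the reverse inclusion, suppose $\lambda\in\sigma(B_{\alpha,\gamma},A^2(dv_\gamma))$ with associated nonzero eigenfunction $f\in A^2(dv_\gamma)$ in the domain of $B_{\alpha,\gamma}$. Expand $f=\sum_{\bm{m}\geq 0}f_{\bm{m}}$ in the Peter--Schmid--Weyl decomposition; applying (\ref{kf0}) componentwise gives
\begin{equation}
\sum_{\bm{m}\geq 0}\left(\lambda-\frac{(\alpha)_{\bm{m}}}{(N+\gamma)_{\bm{m}}}\right)f_{\bm{m}} \;=\; 0.
\end{equation}
Since the decomposition is orthogonal, each summand vanishes. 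As $f\neq 0$, there exists some $\bm{m}_0$ with $f_{\bm{m}_0}\neq 0$, forcing $\lambda=\tfrac{(\alpha)_{\bm{m}_0}}{(N+\gamma)_{\bm{m}_0}}$. This finishes both inclusions.

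There is no real obstacle here; the only minor subtlety is that when $\alpha>N+\gamma$ the operator $B_{\alpha,\gamma}$ is unbounded on $A^2(dv_\gamma)$, so one must phrase the eigenvalue identity on the natural domain prescribed by the prediagonal definition. But polynomials always lie in that domain (the series is finite on each $\mathcal{P}_{\bm{m}}(\mathfrak{p}^+)$), and eigenvectors lie in the domain by assumption, so both arguments go through verbatim without any boundedness hypothesis on $\alpha$.
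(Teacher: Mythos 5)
Your overall strategy is the same as the paper's: read the diagonalization off from (\ref{kf0}), get the inclusion $\{\frac{(\alpha)_{\bm{m}}}{(N+\gamma)_{\bm{m}}}\}\subset\sigma(B_{\alpha,\gamma},A^2(dv_\gamma))$ by applying $B_{\alpha,\gamma}$ to elements of $\mathcal{P}_{\bm{m}}(\mathfrak{p}^+)$, and get the reverse inclusion by expanding an eigenfunction $f$ along the Peter--Schmid--Weyl decomposition. The forward inclusion is fine. The gap is in the reverse inclusion, at exactly the point you dismiss with ``since the decomposition is orthogonal, each summand vanishes.'' The identity $\sum_{\bm{m}}(\lambda-\frac{(\alpha)_{\bm{m}}}{(N+\gamma)_{\bm{m}}})f_{\bm{m}}=0$ that (\ref{Kfz})--(\ref{kf0}) give you is only a pointwise (locally uniform) identity on a smaller spectral ball $B_r$, because the kernel expansion $h(z,w)^{-\alpha}=\sum_{\bm{m}}(\alpha)_{\bm{m}}K^{\bm{m}}(z,w)$ converges only on $B_r\times\overline{\Omega}$. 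To conclude termwise vanishing from orthogonality of the $\mathcal{P}_{\bm{m}}(\mathfrak{p}^+)$ in $A^2(dv_\gamma)$ you would need the series to converge (at least weakly) in $A^2(dv_\gamma)$, so that inner products against each $\mathcal{P}_{\bm{m}'}(\mathfrak{p}^+)$ can be interchanged with the sum. That is not automatic: in the interesting unbounded regime $\alpha>N+\gamma$ the eigenvalues $\frac{(\alpha)_{\bm{m}}}{(N+\gamma)_{\bm{m}}}$ tend to infinity, and for a general $f\in A^2(dv_\gamma)$ the series $\sum_{\bm{m}}\frac{(\alpha)_{\bm{m}}}{(N+\gamma)_{\bm{m}}}f_{\bm{m}}$ need not converge in norm, so ``the arguments go through verbatim'' is precisely the claim that needs proof.

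This missing interchange is the technical content of the paper's proof: there one restricts to $B_r$, observes that suitably rescaled copies of the orthogonal bases $E_{\bm{m}}$ of the spaces $\mathcal{P}_{\bm{m}}(\mathfrak{p}^+)$ form an orthogonal basis of $A^2(B_r,dv)$, checks via the reproducing property and Cauchy's inequality that the relevant series converge uniformly on compact subsets, and then integrates term by term over $B_r$ against each basis vector (display (\ref{rmfe})) to kill every coefficient $(\mu-\mu_{\bm{m}'})\langle f,e_{\bm{m}'}^{(j)}\rangle_\gamma$. Your argument can be repaired along these lines, or alternatively by first separating homogeneous components: local uniform convergence of the holomorphic series on $B_r$ lets you equate the degree-$n$ homogeneous part of the sum to zero for each $n$, and since only finitely many partitions $\bm{m}$ have $|\bm{m}|=n$ and the corresponding $\mathcal{P}_{\bm{m}}(\mathfrak{p}^+)$ are linearly independent, each $(\lambda-\frac{(\alpha)_{\bm{m}}}{(N+\gamma)_{\bm{m}}})f_{\bm{m}}$ vanishes. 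Either way, some such argument must be supplied; as written the key step is asserted rather than proved.
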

\begin{proof} Denote $\mu_{\bm{m}}=\frac{(\alpha)_{\bm{m}}}{(N+\gamma)_{\bm{m}}}$ for all $\bm{m}\geq0.$ By formula (\ref{kf0}), it implies that
 $$\left\{\mu_{\bm{m}}:\bm{m}\geq0\right\}\subset \sigma(B_{\alpha,\gamma},A^2).$$ Thus, it suffices to prove that $$ \sigma(B_{\alpha,\gamma},A^2(dv_\gamma))\subset \left\{\mu_{\bm{m}}:\bm{m}\geq0\right\}.$$
 Suppose $\mu\in\sigma(B_{\alpha,\gamma},A^2(dv_\gamma)),$ then there exists a nonzero
$f \in A^2(dv_\gamma)$ such that
\begin{equation}\label{chk}B_{\alpha,\gamma} f=\mu f.\end{equation}
 Let $E_{\bm{m}}=\{e_{\bm{m}}^{(j)}:j=1,\cdots,d_{\bm{m}}\}$ be an orthogonal basis of $\mathcal{P}_{\bm{m}}( \mathfrak{p}^+)$ under the Bergman inner product $\langle\cdot,\cdot\rangle_\gamma$ for all $\bm{m}\geq0,$ where $d_{\bm{m}}=dim\hspace{1mm}\mathcal{P}_{\bm{m}}(\mathfrak{p}^+).$ It is easy to see that $\cup_{\bm{m}\geq0}E_{\bm{m}}$ is an orthogonal basis of the Bergman space $A^2(dv_\gamma).$ Thus $f$ has the following representation
 \begin{equation}\label{rep}f=\sum_{\bm{m}\geq0}\sum_{j=1}^{d_{\bm{m}}}\langle f,e_{\bm{m}}^{(j)}\rangle_\gamma\hspace{0.5mm}e_{\bm{m}}^{(j)}.\end{equation} Combing this representation with (\ref{Kfz}), (\ref{bff}) and (\ref{chk}), we obtain that  \begin{equation}\label{repp}\sum_{\bm{m}\geq0}\sum_{j=1}^{d_{\bm{m}}}(\mu-\mu_{\bm{m}})\langle f,e_{\bm{m}}^{(j)}\rangle_\gamma\hspace{0.5mm}e_{\bm{m}}^{(j)}=0  \end{equation} as function on the spectral ball $B_r$ with $0<r<1.$ Note that  $B_r$ is biholomorphic to $\Omega=B_1$ for any $0<r<1.$  This along with formula (1.11) in \cite{FK}, it  is easy to see that $$\bigcup_{\bm{m}\geq0} r^{-(d+\vert \bm{m}\vert)}\sqrt{\frac{(N)_{\bm{m}}}{(N+\gamma)_{\bm{m}}}}E_{\bm{m}}$$ is an  orthogonal basis of $A^2(B_r,dv).$ The reproducing property (\ref{repf}), Cauchy's inequality and (\ref{hxx}) imply that convergences in (\ref{rep}) and (\ref{repp}) are uniform in the compact subset. Thus we have
% \begin{equation}\begin{split}\label{rmfe}
% &r^{(d+\vert \bm{m}'\vert)}(\mu-\mu_{\bm{m}'})\langle f,e_{\bm{m}'}^{(j)}\rangle_\gamma\\
% &=\int_{B_r} r^{2(d+\vert \bm{m}'\vert+\gamma\vert \bm{m}'\vert)}(\mu-\mu_{\bm{m}'})\langle f,r^{-(d+\vert \bm{m}'\vert+\gamma\vert \bm{m}'\vert)}e_{\bm{m}'}^{(j)}\rangle \hspace{0.5mm} r^{-(d+\vert \bm{m}'\vert+\gamma\vert \bm{m}'\vert)} e_{\bm{m}'}^{(j)}r^{-(d+\vert \bm{m}'\vert+\gamma\vert \bm{m}'\vert)}\overline{e_{\bm{m}'}^{(j)}}dv_\gamma\\
%& =\sum_{\bm{m}\geq0}\sum_{j=1}^{d_{\bm{m}}}\int_{B_r}r^{2(d+\vert \bm{m}\vert)}(\mu-\mu_{\bm{m}})\langle f,r^{-(d+\vert \bm{m}\vert)}e_{\bm{m}}^{(j)}\rangle \hspace{0.5mm} r^{-(d+\vert \bm{m}\vert)} e_{\bm{m}}^{(j)}r^{-(d+\vert \bm{m}'\vert)}\overline{e_{\bm{m}'}^{(j)}}dv_\gamma\\
%& =\int_{B_r}\sum_{\bm{m}\geq0}\sum_{j=1}^{d_{\bm{m}}}r^{2(d+\vert \bm{m}\vert)}(\mu-\mu_{\bm{m}})\langle f,r^{-(d+\vert \bm{m}\vert)}e_{\bm{m}}^{(j)}\rangle \hspace{0.5mm} r^{-(d+\vert \bm{m}\vert)} e_{\bm{m}}^{(j)}r^{-(d+\vert \bm{m}'\vert)}\overline{e_{\bm{m}'}^{(j)}}dv_\gamma\\
%&=0,
   % \end{split}\end{equation}

 \begin{equation}\begin{split}\label{rmfe}
r^{2(d+\vert \bm{m}\vert)}&\frac{(N+\gamma)_{\bm{m}}}{(N)_{\bm{m}}}(\mu-\mu_{\bm{m}'})\langle f,e_{\bm{m}'}^{(j)}\rangle_\gamma\\
 &=\int_{B_r} (\mu-\mu_{\bm{m}'})\langle f,e_{\bm{m}'}^{(j)}\rangle_\gamma\hspace{0.5mm} e_{\bm{m}'}^{(j)}\overline{e_{\bm{m}'}^{(j)}}dv\\
& =\sum_{\bm{m}\geq0}\sum_{j=1}^{d_{\bm{m}}}\int_{B_r} (\mu-\mu_{\bm{m}})\langle f,e_{\bm{m}}^{(j)}\rangle_\gamma\hspace{0.5mm} e_{\bm{m}'}^{(j)} \overline{e_{\bm{m}'}^{(j)}}dv\\
& =\int_{B_r}\sum_{\bm{m}\geq0}\sum_{j=1}^{d_{\bm{m}}}(\mu-\mu_{\bm{m}})\langle f,e_{\bm{m}}^{(j)}\rangle \hspace{0.5mm}  e_{\bm{m}}^{(j)} \overline{e_{\bm{m}'}^{(j)}}dv\\
&=0,
    \end{split}\end{equation}
    for any $\bm{m}'\geq0.$
 Since $f$ is nonzero,  (\ref{rep}) implies that there exist $\bm{m}_0\geq0$ and $j_0$ such that \begin{equation}\label{repc}\langle f,e_{\bm{m}_0}^{(j_0)}\rangle\neq0 .\end{equation}
 Then (\ref{rmfe}) and (\ref{repc}) yield that $\mu=\mu_{\bm{m}_0}$ and \begin{equation}\label{eigp} f=\sum_{j=1}^{d_{\bm{m}_0}}\langle f,e_{\bm{m}_0}^{(j)}\rangle \hspace{0.5mm}e_{\bm{m}_0}^{(j)}.\end{equation}
 It completes the proof.
\end{proof}
\begin{rem} \label{preka} Indeed, (\ref{kf0}), (\ref{chk}), (\ref{rep}) and (\ref{eigp}) imply that $B_{\alpha,\gamma}: A^2(dv_\gamma)\rightarrow A^2(dv_\gamma) $ is the prediagonal operator with point spectrum (counting geometric multiplicities) as its characteristic whether  $B_{\alpha,\gamma}$ is bounded or not.
\end{rem}

\begin{lem}\label{dime}
%\begin{enumerate}\item
(1) There exists an uniform positive constant $C$ satisfying \begin{equation}\label{dim}%\frac{1}{C}\prod_{j=1}^r\frac{1}{m_j^{-2(r-j)a-b}}\leq
dim\hspace{1mm}\mathcal{P}_{\bm{m}}(\mathfrak{p}^+)\leq C\prod_{j=1}^k m_j^{(r-j)a+b}\end{equation}
for any $\bm{m}\in I_\Omega(k),1\leq k\leq r.$

(2)\cite{UpW} Moreover, there exists an uniform  positive constant $C$ satisfying  \begin{equation}\label{}%\frac{1}{C}\prod_{j=1}^r\frac{1}{m_j^{-2(r-j)a-b}}\leq
 \frac{1}{C} m_1^{(r-1)a+b} \leq dim\hspace{1mm}\mathcal{P}_{\bm{m}}(\mathfrak{p}^+)\leq C m_1^{(r-1)a+b}\end{equation}
for any $\bm{m}\in I_\Omega(1).$
%\end{enumerate}
\end{lem}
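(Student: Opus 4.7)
The plan is to derive both bounds from the explicit Upmeier--Wallach dimension formula for $\mathcal{P}_{\bm{m}}(\mathfrak{p}^+)$ (stated in UpW; see also FK, Chapter XI). That formula gives $d_{\bm{m}}:=\dim \mathcal{P}_{\bm{m}}(\mathfrak{p}^+)$ in the schematic form
$$d_{\bm{m}}=C_{a,b,r}\cdot R(\bm{m})\cdot G(\bm{m}),$$
where $C_{a,b,r}>0$ depends only on $(a,b,r)$,
$$R(\bm{m})=\prod_{1\leq i<j\leq r}\prod_{\nu=0}^{a-1}\Bigl(m_i-m_j+\tfrac{a}{2}(j-i)+\nu\Bigr)$$
is the contribution of the roots $\pm\tfrac12(\gamma_i\pm\gamma_j)$ of multiplicity $a$, and
$$G(\bm{m})=\prod_{i=1}^r \frac{\Gamma(m_i+(r-i)\tfrac{a}{2}+b+1)}{\Gamma(m_i+(r-i)\tfrac{a}{2}+1)}$$
is the contribution of the $\gamma_i$ and $\tfrac12\gamma_i$ factors (the precise integer shifts inside $R$ and the precise constant $C_{a,b,r}$ are irrelevant for the degree count that follows).

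Part (2) is then immediate (and is the version of the formula attributed to UpW): taking $m_2=\cdots=m_r=0$ makes $d_{\bm{m}}$ a polynomial in the single variable $m_1$ of exact degree $a(r-1)+b$ with positive leading coefficient, whence the two-sided bound.

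For part (1), I would fix $\bm{m}\in I_\Omega(k)$ and substitute $m_{k+1}=\cdots=m_r=0$ into the formula, then split $R(\bm{m})$ into three pieces according to whether (i) $j\leq k$, (ii) $i\leq k<j$, or (iii) $k<i<j$. Piece (iii) becomes a positive constant. Piece (ii) consists, for each pair $i\leq k<j$, of $a$ linear factors in $m_i$ of the form $m_i+\tfrac{a}{2}(j-i)+\nu$, so contributes at most $C\prod_{i=1}^k m_i^{a(r-k)}$. Piece (i), using $0\leq m_i-m_j\leq m_i$ together with $m_i\geq m_k\geq 1$ (both built into the definition of $I_\Omega(k)$), is bounded by
$$\prod_{1\leq i<j\leq k}(m_i+C)^{a}\lesssim \prod_{i=1}^k m_i^{a(k-i)}.$$
Stirling's formula then yields $G(\bm{m})\sim \prod_{i=1}^k m_i^{b}$ (the $i>k$ factors are bounded constants). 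Multiplying the three contributions gives
$$d_{\bm{m}}\lesssim \prod_{i=1}^k m_i^{a(r-k)+a(k-i)+b}=\prod_{i=1}^k m_i^{(r-i)a+b},$$
which is (1).

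The main obstacle is purely bookkeeping: correctly aligning the multiplicity $a$ of the $\tfrac12(\gamma_i\pm\gamma_j)$ roots with the degree count in each of the three pieces of $R(\bm{m})$. No analytic input beyond Stirling's formula is required, and the ordering condition $m_1\geq\cdots\geq m_k\geq 1$ that is built into $I_\Omega(k)$ is precisely what is needed both to bound $m_i-m_j$ above by $m_i$ and to apply Stirling on the Gamma ratios $G(\bm{m})$.
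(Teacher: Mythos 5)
Your proposal is correct and follows essentially the same route as the paper: the paper also starts from the explicit dimension formula (cited there from Upmeier's Lemmas 2.6--2.7), writes it as a Gamma/Pochhammer ratio $(\rho)_{\bm{m}}/(\rho-b)_{\bm{m}}$ contributing $\prod_{j\le k} m_j^{b}$ via Stirling, and bounds the root-pair product by splitting it into exactly your three pieces $i<j\le k$, $i\le k<j$, $k<i<j$, extracting $\prod_{j\le k} m_j^{(r-j)a}$. The same degenerate case $\bm{m}=(m_1,0,\dots,0)$ gives part (2), so there is nothing substantive to add.
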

\begin{proof}(1) For any $\beta,\gamma\in\mathbb{C}$ and $k\in \mathbb{N},$ we define $(\beta)_{k,\gamma}$ by
            $$(\beta)_{k,\gamma}=\prod_{j=0}^{k-1}(\beta+j\cdot\gamma).$$
            Obviously, $(\beta)_{k,1}=(\beta)_{k}$ for any  $\beta\in\mathbb{C}$ and $k\in \mathbb{N}.$ Now suppose  $\bm{m}\in I_\Omega(k),1\leq k\leq r.$ By \cite[Lemma 2.6 and 2.7]{Up} it follows that
           \begin{equation}\begin{split}
           dim\hspace{1mm}\mathcal{P}_{\bm{m}}(\mathfrak{p}^+)&=\frac{(\rho)_{\bm{m}}}{(\rho-b)_{\bm{m}}}\prod_{1\leq i<j\leq r}\frac{m_i-m_j+\frac{a}{2}(j-i)}{\frac{a}{2}(j-i)}\cdot\frac{(m_i-m_j+\frac{a}{2}(j-i-1))_{a-1}}{(1+\frac{a}{2}(j-i-1))_{a-1}}\notag\\
            &\overset{\text{def}}{=} \frac{(\rho)_{\bm{m}}}{(\rho-b)_{\bm{m}}}I(a,\bm{m}),
           \end{split}\end{equation}
where $\rho=1+\frac{a}{2}(r-1)+b.$ By the definition and Stirling's formula, we then get that  \begin{equation}\begin{split}\label{rhom}
\frac{(\rho)_{\bm{m}}}{(\rho-b)_{\bm{m}}}&=\frac{\Gamma_{\Omega}(\rho+\bm{m})}{\Gamma_{\Omega}(\rho-b+\bm{m})}\frac{\Gamma_{\Omega}(\rho-b)}{\Gamma_{\Omega}(\rho)}\\
                                         &=\frac{\Gamma_{\Omega}(\rho-b)}{\Gamma_{\Omega}(\rho)}\prod_{j=1}^{r}\frac{\Gamma(\rho+m_j-\frac{a}{2}(j-1))}{\Gamma(\rho-b+m_j-\frac{a}{2}(j-1))}\\
                                         &\sim \prod_{j=1}^{k}m_j^{b},
 \end{split}\end{equation}
as $I_\Omega(k)\ni\bm{m}\rightarrow\infty.$   On the other hand,
\begin{equation}\begin{split}\label{iam}
I(a,\bm{m})&=\prod_{1\leq i<j\leq r}\frac{m_i-m_j+\frac{a}{2}(j-i)}{\frac{a}{2}(j-i)}\cdot \prod_{1\leq i<j\leq r}\frac{(m_i-m_j+\frac{a}{2}(j-i-1))_{a-1}}{(1+\frac{a}{2}(j-i-1))_{a-1}}\\
           &=\left(\prod_{j=1}^k m_j^{r-j}\prod_{1\leq i<j\leq k}\frac{1-\frac{m_j}{m_i}+\frac{\frac{a}{2}(j-i)}{m_i}}{\frac{a}{2}(j-i)}\right)\cdot\prod_{1\leq i\leq k<j\leq r}\frac{1+\frac{\frac{a}{2}(j-i)}{m_i}}{\frac{a}{2}(j-i)}\\
           &\hspace{0.5cm}\times\left( \prod_{j=1}^k m_j^{(r-j)(a-1)}\prod_{1\leq i<j\leq k}\frac{(1-\frac{m_j}{m_i}+\frac{\frac{a}{2}(j-i-1)}{m_i})_{a-1,\frac{1}{m_i}}}{(1+\frac{a}{2}(j-i-1))_{a-1}}\right)\\
           &\hspace{0.5cm}\times\prod_{1\leq i\leq k<j\leq r}\frac{(1+\frac{\frac{a}{2}(j-i-1)}{m_i})_{a-1,\frac{1}{m_i}}}{(1+\frac{a}{2}(j-i-1))_{a-1}}\cdot\prod_{k+1\leq i<j\leq r}\frac{(\frac{a}{2}(j-i-1))_{a-1}}{(1+\frac{a}{2}(j-i-1))_{a-1}}\\
           &\leq C_k\prod_{j=1}^k m_j^{(r-j)a}.
 \end{split}\end{equation}
Combing (\ref{rhom})  with (\ref{iam}), we thus obtain
$$ dim\hspace{1mm}\mathcal{P}_{\bm{m}}(\mathfrak{p}^+)=\frac{(\rho)_{\bm{m}}}{(\rho-b)_{\bm{m}}}I(a,\bm{m})\leq C\prod_{j=1}^k m_j^{(r-j)a+b},$$
for any $\bm{m}\in I_\Omega(k),1\leq k\leq r,$ where $C=\max_{1\leq k\leq r}\{C_k\}.$

(2) Now suppose $\bm{m}=(m_1,0,\cdots,0)\in I_\Omega(1).$ The direct calculation implies that \begin{small}$$I(a,\bm{m})=m_1^{(r-1)a}\prod_{j=2}^r\frac{1+\frac{\frac{a}{2}(j-1)}{m_1}}{\frac{a}{2}(j-1)}\cdot\frac{(1+\frac{a}{2m_1}(j-2))_{a-1,\frac{1}{m_1}}}{(1+\frac{a}{2}(j-2))_{a-1}}\prod_{2\leq i<j\leq r}\frac{(\frac{a}{2}(j-i-1))_{a-1}}{(1+\frac{a}{2}(j-i-1))_{a-1}}.$$\end{small}
This along with (\ref{rhom}) yields the desired result.
\end{proof}
\begin{rem} In the case of Hilbert unit ball, the $I(\Omega)$ is just the set of nonnegative integers and $\mathcal{P}_m(\mathfrak{p}^+)$ is just  homogeneous polynomial space with degree $m.$ In this case, the dimension of $\mathcal{P}_m(\mathfrak{p}^+)$ is  $$dim \hspace{0.3mm}\mathcal{P}_m(\mathfrak{p}^+)= \frac{(m+1)_{d-1}}{(d-1)!}, \quad m\geq 0,$$ which coincides with (2) of Lemma \ref{dime}.
\end{rem}
Now we recall the definition of Berezin transform related to the Bergman space $A^2(dv_\gamma)$ on the bounded symmetric domain $\Omega.$
Recall that Bergman space $A^2(dv_\gamma)$ is a reproducing kernel Hilbert function space, whose reproducing kernel is given by  $$K_{\gamma,w}(z)= K_\gamma(z,w)=h(z,w)^{-(N+\gamma)}, \quad   z,w \in \Omega.$$
%  \begin{equation}\label{repf} f(z)=\langle f, K_z\rangle,  \quad   z \in \Omega\end{equation}for any $f\in A^2(dv_\gamma),$ where  is the so-called of $A^2(dv_\gamma).$
The normalized reproducing kernel of  $A^2(dv_\gamma)$ is
 \begin{equation}\label{nrep}  k_{\gamma,w}(z)=\frac{K_\gamma(z,w)}{\sqrt{K_\gamma(w,w)}}=\frac{h(w,w)^\frac{N+\gamma}{2}}{h(z,w)^{N+\gamma}},  \quad   z,w \in \Omega.\end{equation}
For a linear operator $T$ on $A^2(dv_\gamma)$, the Berezin transform $ \widetilde{T}$ of $T$ is given by
\begin{equation}\label{bert}  \widetilde{T}(z)=\langle T k_{\gamma,z},k_{\gamma,z}\rangle_\gamma,\quad z \in \Omega.\end{equation}
The M\"obius  invariant measure $d\lambda$ on $\Omega $ is defined by $$ d\lambda(z)=\frac{dv(z)}{h(z,z)^{N}}.$$ It is easy to verify that the measure $d\lambda$  is biholomorphic invariant.
The Berezin transform  is an important tool in the operator theory on the holomorphic function space on the Hilbert ball; see \cite{Zhu3,Zhu4} for more details.
In what follows, we exactly calculate the Berezin transform of $B_{\alpha,\gamma}$ on $\Omega.$
\begin{lem}\label{bere} For any $\alpha\in\mathbb{R},$
$$ \widetilde{B_{\alpha,\gamma}}(z)=h(z,z)^{N+\gamma-\alpha},$$
on $\Omega.$
\end{lem}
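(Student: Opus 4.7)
My plan is to exploit the prediagonal structure of $B_{\alpha,\gamma}$ on $A^2(dv_\gamma)$ together with the Faraut--Koranyi expansion of the reproducing kernel, so that the Berezin transform reduces to a single kernel identity.

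First I would rewrite the normalized reproducing kernel using (\ref{nrep}) and expand the $h$-factor via the convergent series from \cite[Theorem 3.8]{FK} (the same expansion used inside Lemma \ref{hol}):
\begin{equation*}
k_{\gamma,z}(w)\;=\;h(z,z)^{(N+\gamma)/2}\,h(w,z)^{-(N+\gamma)}\;=\;h(z,z)^{(N+\gamma)/2}\sum_{\bm{m}\geq 0}(N+\gamma)_{\bm{m}}\,K^{\bm{m}}(w,z),
\end{equation*}
where for each fixed $z$ the function $w\mapsto K^{\bm{m}}(w,z)$ belongs to the $K$-irreducible piece $\mathcal{P}_{\bm{m}}(\mathfrak{p}^+)$, and the series converges uniformly for $w$ in any spectral ball $B_r\Subset\Omega$.

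Next I would apply $B_{\alpha,\gamma}$ term by term. By Remark \ref{preka} (equivalently, by the chain of identities culminating in (\ref{kf0})), $B_{\alpha,\gamma}$ acts on $\mathcal{P}_{\bm{m}}(\mathfrak{p}^+)$ as multiplication by $(\alpha)_{\bm{m}}/(N+\gamma)_{\bm{m}}$; thus
\begin{equation*}
B_{\alpha,\gamma}K^{\bm{m}}(\cdot,z)\;=\;\frac{(\alpha)_{\bm{m}}}{(N+\gamma)_{\bm{m}}}\,K^{\bm{m}}(\cdot,z).
\end{equation*}
Interchanging $B_{\alpha,\gamma}$ with the sum (justified by the same dominated-convergence argument used in Lemma \ref{hol}) yields
\begin{equation*}
B_{\alpha,\gamma}k_{\gamma,z}(u)\;=\;h(z,z)^{(N+\gamma)/2}\sum_{\bm{m}\geq 0}(\alpha)_{\bm{m}}\,K^{\bm{m}}(u,z).
\end{equation*}

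To finish, I would take the inner product against $k_{\gamma,z}$ termwise. Since $K^{\bm{m}}(\cdot,z)\in\mathcal{P}_{\bm{m}}(\mathfrak{p}^+)\subset A^2(dv_\gamma)$, the reproducing property (\ref{repf}) applied to the normalized kernel gives
\begin{equation*}
\langle K^{\bm{m}}(\cdot,z),k_{\gamma,z}\rangle_\gamma\;=\;K^{\bm{m}}(z,z)\,h(z,z)^{(N+\gamma)/2}.
\end{equation*}
Plugging this in and invoking once more the Faraut--Koranyi expansion $\sum_{\bm{m}\geq 0}(\alpha)_{\bm{m}}K^{\bm{m}}(z,z)=h(z,z)^{-\alpha}$, I obtain
\begin{equation*}
\widetilde{B_{\alpha,\gamma}}(z)\;=\;h(z,z)^{N+\gamma}\sum_{\bm{m}\geq 0}(\alpha)_{\bm{m}}K^{\bm{m}}(z,z)\;=\;h(z,z)^{N+\gamma-\alpha}.
\end{equation*}
The only real technical issue is the legitimacy of exchanging $B_{\alpha,\gamma}$ and $\langle\cdot,k_{\gamma,z}\rangle_\gamma$ with the infinite sum; this is not genuinely new, as both steps are handled by the uniform and absolute convergence of the Faraut--Koranyi series on compact subsets combined with dominated convergence, exactly as in the proof of Lemma \ref{hol}.
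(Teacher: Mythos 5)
Your proof is correct, but it takes a more roundabout route than the paper. The paper's proof is a two-step reproducing-kernel computation with no series at all: since $h(\cdot,z)^{-\alpha}$ is bounded holomorphic on $\Omega$ for fixed $z$, hence lies in $A^2(dv_\gamma)$, one conjugates the defining integral and applies (\ref{repf}) to get $B_{\alpha,\gamma}K_{\gamma,z}(w)=\overline{\langle h_w^{-\alpha},K_{\gamma,z}\rangle_\gamma}=h(w,z)^{-\alpha}$, and then $\widetilde{B_{\alpha,\gamma}}(z)=K_\gamma(z,z)^{-1}\langle B_{\alpha,\gamma}K_{\gamma,z},K_{\gamma,z}\rangle_\gamma=h(z,z)^{N+\gamma}\,h(z,z)^{-\alpha}$ by one more application of the reproducing property. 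You instead expand $k_{\gamma,z}$ in the Faraut--Koranyi series, use the eigenvalue action of $B_{\alpha,\gamma}$ on each $\mathcal{P}_{\bm{m}}(\mathfrak{p}^+)$ from (\ref{kf0})/Remark \ref{preka}, and resum; note that your middle display already says $B_{\alpha,\gamma}k_{\gamma,z}=h(z,z)^{(N+\gamma)/2}h(\cdot,z)^{-\alpha}$, which is precisely the paper's kernel identity in normalized form, so the two arguments meet at the same intermediate step. What your version buys is a clean illustration of how the prediagonal structure determines the Berezin transform (each coefficient $(N+\gamma)_{\bm{m}}$ cancels against the eigenvalue denominator, which is legitimate since $N+\gamma>\frac{a}{2}(r-1)$ keeps $(N+\gamma)_{\bm{m}}\neq0$); what it costs is two series--integral interchanges that the paper avoids entirely. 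Those interchanges are indeed justified as you indicate: for $z$ fixed in a spectral ball $B_r$ the Faraut--Koranyi series converges uniformly and absolutely for the integration variable in $\overline{\Omega}$, the other factor in each integral is bounded there, and $v_\gamma$ is finite, so termwise integration is permitted. So there is no gap, only extra machinery where a direct use of (\ref{repf}) suffices.
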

\begin{proof}For any fixed $z\in \Omega,$ \cite[Theorem 3.8]{FK} implies that $h(w,z)^{-\alpha}$ is a bounded holomorphic function on $\Omega,$ in particular, $h(w,z)^{-\alpha}\in A^2(dv_\gamma)$ for any $\alpha\in\mathbb{R}.$   Then (\ref{repf}) implies that
\begin{equation}\begin{split}\label{kkz}
B_{\alpha,\gamma} K_{\gamma,z}(w)&=\int_{\Omega}\frac{1}{h(w,u)^\alpha}\frac{1}{h(u,z)^{N+\gamma}}dv_\gamma(u)\\
               &=\overline{\int_{\Omega}\frac{1}{h(u,w)^\alpha}\frac{1}{h(z,u)^{N+\gamma}}dv_\gamma(u)}\\
               &=\overline{\langle h^{-\alpha}_w,K_{\gamma,z} \rangle_\gamma}\\
               &=h^{-\alpha}(w,z).
 \end{split}\end{equation}
Combing this with (\ref{nrep}) and (\ref{bert}) shows that
 \begin{equation}\begin{split}
 \widetilde{B_{\alpha,\gamma}}(z)&=\langle B_{\alpha,\gamma} k_{\gamma,z},k_{\gamma,z}\rangle_\gamma\notag\\
                        &=\frac{1}{K_\gamma(z,z)}\langle B_{\alpha,\gamma} K_{\gamma,z},K_{\gamma,z}\rangle_\gamma\\
                        &=h(z,z)^{N+\gamma-\alpha}.
                        \end{split}\end{equation}
                        It completes the proof.
\end{proof}

{\noindent\bf Proof of Theorem \ref{thms}.}  It follows from Lemma \ref{poisp} and Remark \ref{preka}  that $B_{\alpha,\gamma}:A^2(dv_\gamma)\rightarrow A^2(dv_\gamma)$ is a prediagonal operator with characteristic $\{\frac{(\alpha)_{\bm{m}}}{(N)_{\bm{m}}}\},$ where of course counting multiplicities. Since $A^2(dv_\gamma)$ is a closed subspace of the Hilbert space $L^2(dv_\gamma),$ combing with (\ref{oid}) implies that $B_{\alpha,\gamma}:L^2(dv_\gamma)\rightarrow L^2(dv_\gamma)$ is a prediagonal operator with characteristic  $\{\frac{(\alpha)_{\bm{m}}}{(N+\gamma)_{\bm{m}}}\}\cup\{0\}$. Then Lemma \ref{dgf} yields that $B_{\alpha,\gamma}\in S_p(L^2(dv_\gamma))$ if and only if  $B_{\alpha,\gamma}\in S_p(A^2(dv_\gamma))$ if and only if $\{\frac{(\alpha)_{\bm{m}}}{(N+\gamma)_{\bm{m}}}\}\in \ell^p$ for $0<p<\infty.$ Thus (1) is equivelent to (2) which is equivalent to $\{\frac{(\alpha)_{\bm{m}}}{(N+\gamma)_{\bm{m}}}\}\in \ell^p.$  It remains to prove $\{\frac{(\alpha)_{\bm{m}}}{(N+\gamma)_{\bm{m}}}\}\in \ell^p $ is equivalent to (4) and (3)  is equivalent to (4) in the condition $\alpha\in\mathscr{B}_\gamma.$  We first prove the equivalence of (3) and (4). From Lemma \ref{bere},  we know that $\widetilde{B_{\alpha,\gamma}}\in L^p(d\lambda)$ if and only if $$\int_{\Omega}\vert\widetilde{B_{\alpha,\gamma}}\vert^p d\lambda=\int_{\Omega}h^{p(N+\gamma-\alpha)-N}dv<\infty.$$ This along with the well-known fact that
$\int_{\Omega}h^tdv<\infty$ for $t\in\mathbb{R}$  is equivalent to $t>-1$ implies that $p>\frac{1+b+(r-1)a}{N+\gamma-\alpha}=\frac{N-1}{N+\gamma-\alpha},$ which proves  the equivalence of (3) and (4).

Now we turn to prove $\{\frac{(\alpha)_{\bm{m}}}{(N+\gamma)_{\bm{m}}}\}\in \ell^p$ for $0<p<\infty$  is equivalent to  $p>\frac{N-1}{N+\gamma-\alpha}.$ Suppose $p>\frac{N-1}{N+\gamma-\alpha}=\frac{1+b+(r-1)a}{N+\gamma-\alpha},$ namely $p(N+\gamma-\alpha)-(r-1)a-b>1,$ so $$p(N+\gamma-\alpha)-(r-j)a-b>1,j=1,\cdots,r.$$ Then (\ref{inp}),(\ref{asyy}) and Lemma \ref{dime} imply that \begin{equation}\begin{split}
\sum_{\bm{m}\in I(\Omega)}&\left\vert\frac{(\alpha)_{\bm{m}}}{(N+\gamma)_{\bm{m}}}\right \vert^pdim\hspace{1mm}\mathcal{P}_{\bm{m}}(\mathfrak{p}^+)\\
&=\left\vert\frac{(\alpha)_{\bm{0}}}{(N+\gamma)_{\bm{0}}}\right\vert^p dim\hspace{1mm}\mathcal{P}_{\bm{0}}(Z)+\sum_{k=1}^r\sum_{\bm{m}\in I_k(\Omega)}\left\vert\frac{(\alpha)_{\bm{m}}}{(N+\gamma)_{\bm{m}}}\right\vert^p dim\hspace{1mm}\mathcal{P}_{\bm{m}}(\mathfrak{p}^+)\notag\\
&\leq \left\vert\frac{(\alpha)_{\bm{0}}}{(N+\gamma)_{\bm{0}}}\right\vert^p dim\hspace{1mm}\mathcal{P}_{\bm{0}}(Z)+C\sum_{k=1}^r\sum_{\bm{m}\in I_k(\Omega)}\prod_{j=1}^k\frac{1}{m_j^{p(N+\gamma-\alpha)-(r-j)a-b}}\\
&\leq \left\vert\frac{(\alpha)_{\bm{0}}}{(N+\gamma)_{\bm{0}}}\right\vert^p dim\hspace{1mm}\mathcal{P}_{\bm{0}}(Z)+C\sum_{k=1}^r\prod_{j=1}^k\left(\sum_{m=1}^\infty\frac{1}{m^{p(N+\gamma-\alpha)-(r-j)a-b}}\right)\\
&<\infty,
\end{split}\end{equation}
which means that $\{\frac{(\alpha)_{\bm{m}}}{(N+\gamma)_{\bm{m}}}\}\in \ell^p$ for $0<p<\infty.$

Suppose $\{\frac{(\alpha)_{\bm{m}}}{(N+\gamma)_{\bm{m}}}\}\in \ell^p$ for $0<p<\infty.$  Then (\ref{inp}) and Lemma \ref{dime} imply that
\begin{equation}\begin{split}
\infty&>\sum_{\bm{m}\in I(\Omega)}\left\vert\frac{(\alpha)_{\bm{m}}}{(N+\gamma)_{\bm{m}}}\right\vert^p dim\hspace{1mm}\mathcal{P}_{\bm{m}}(\mathfrak{p}^+)\notag\\
&=\left\vert\frac{(\alpha)_{\bm{0}}}{(N+\gamma)_{\bm{0}}}\right\vert^p dim\hspace{1mm}\mathcal{P}_{\bm{0}}(Z)+\sum_{k=1}^r\sum_{\bm{m}\in I_k(\Omega)}\left\vert\frac{(\alpha)_{\bm{m}}}{(N+\gamma)_{\bm{m}}}\right\vert^p dim\hspace{1mm}\mathcal{P}_{\bm{m}}(\mathfrak{p}^+)\\
&> \sum_{\bm{m}\in I_1(\Omega)}\left\vert\frac{(\alpha)_{\bm{m}}}{(N+\gamma)_{\bm{m}}}\right\vert^p dim\hspace{1mm}\mathcal{P}_{\bm{m}}(\mathfrak{p}^+)\\
&\geq \frac{1}{C}\sum_{m=1}^\infty\frac{1}{m^{p(N+\gamma-\alpha)-(r-1)a-b}},\\
\end{split}\end{equation}
which implies that $p(N+\gamma-\alpha)-(r-1)a-b>1,$ namely $p>\frac{1+b+(r-1)a}{N+\gamma-\alpha}=\frac{N-1}{N+\gamma-\alpha}.$ \qed
\section{trace formulae for Bergman-type operators}
From Theorem \ref{thmf},\ref{thms} we know that $B_{\alpha,\gamma}$ belongs to the trace class $S_1(L^2(dv_\gamma))$ or $S_1(A^2(dv_\gamma))$ if and only if $\alpha<1+\gamma$ or $\alpha\in\mathscr{F}.$ Since Bergman space $A^2(dv_\gamma)$ is a closed subspace of  $L^2(dv_\gamma) $ and (\ref{ka0}),  it follows that  the trace $Tr(B_{\alpha,\gamma})$ on $L^2(dv_\gamma)$ and  $A^2(dv_\gamma)$ are the same when $\alpha<1+\gamma$ or $\alpha\in\mathscr{F}.$ Moreover,  $B_{\alpha,\gamma}$ belongs to the Hilbert-Schmidt class $S_2(L^2(dv_\gamma))$ or $S_2(A^2(dv_\gamma))$ if and only if $\alpha<\frac{N+1+2\gamma}{2}.$  Similarly,  the trace $Tr(B_{\alpha,\gamma}^\ast B_{\alpha,\gamma})$  on $L^2(dv_\gamma)$ and  $A^2(dv_\gamma)$ are the same when  $\alpha<\frac{N+1+2\gamma}{2}.$

\begin{prop} \label{kss} (1) The operator $B_{\alpha,\gamma}\in S_1$ if and only if $\alpha<1+\gamma$ or $\alpha\in\mathscr{F}.$ In this case, $$Tr(B_{\alpha,\gamma})=\int_{\Omega}h(z,z)^{-\alpha}dv_\gamma(z).$$%=\frac{\Gamma_{\Omega}(N+\gamma)}{\Gamma_\Omega(N+\gamma-\alpha)}\frac{\Gamma_\Omega(\frac{a}{2}(r-1)+1+\gamma-\alpha)}{\Gamma_{\Omega}(\frac{a}{2}(r-1)+1+\gamma)}.$$

(2) The operator $B_{\alpha,\gamma}\in S_2$ if and only if $\alpha<\frac{N+1+2\gamma}{2}.$ In this case, $$Tr(B_{\alpha,\gamma}^\ast B_{\alpha,\gamma})=\int_{\Omega\times\Omega} \vert h(w,z)\vert ^{-2\alpha}dv_\gamma(w)dv_\gamma(z).$$

\end{prop}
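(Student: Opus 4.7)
I would combine the prediagonal structure of $B_{\alpha,\gamma}$ established in the preceding section with the Faraut--Koranyi expansion of the kernel, and then read off the membership characterizations and the two trace identities.

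For the membership statements, specialize Theorems~\ref{thmf}--\ref{thms} to $p=1$ and $p=2$: when $\alpha\in\mathscr{B}_\gamma$, the inequality $p>\frac{N-1}{N+\gamma-\alpha}$ yields the two thresholds $\alpha<1+\gamma$ and $\alpha<\frac{N+1+2\gamma}{2}$, while Proposition~\ref{fir} (and Theorem~\ref{thmf}) handles $\alpha\in\mathscr{F}$. A short computation gives $\max\mathscr{F}=\frac{a(r-1)}{2}<\frac{N+1+2\gamma}{2}$, so the $\mathscr{F}$-clause is absorbed in the single inequality of (2); it is however retained as a separate disjunction in (1) since $\max\mathscr{F}$ can exceed $1+\gamma$ on higher-rank domains. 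Unboundedness for $\alpha>N+\gamma$ (Lemma~\ref{bksd}) rules out all remaining $\alpha$.

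For the trace of $B_{\alpha,\gamma}$, Remark~\ref{preka} together with (\ref{ka0}) identifies $B_{\alpha,\gamma}$ as a prediagonal operator on $L^2(dv_\gamma)$ with eigenvalue $(\alpha)_{\bm m}/(N+\gamma)_{\bm m}$ of multiplicity $d_{\bm m}:=\dim\mathcal{P}_{\bm m}(\mathfrak{p}^+)$, so Lidskii gives $Tr(B_{\alpha,\gamma})=\sum_{\bm m}(\alpha)_{\bm m} d_{\bm m}/(N+\gamma)_{\bm m}$. I would match this with $\int_\Omega h(z,z)^{-\alpha}\,dv_\gamma(z)$ by inserting the expansion $h(z,z)^{-\alpha}=\sum_{\bm m}(\alpha)_{\bm m}K^{\bm m}(z,z)$ and using (\ref{bff}) to evaluate $\int_\Omega K^{\bm m}(z,z)\,dv_\gamma(z)=d_{\bm m}/(N+\gamma)_{\bm m}$ (a Fischer--Fock orthonormal basis of $\mathcal{P}_{\bm m}$ has Bergman squared norm $1/(N+\gamma)_{\bm m}$ on each vector). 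The Hilbert--Schmidt formula is the analogous computation one dimension up: the prediagonal structure yields $\|B_{\alpha,\gamma}\|_{S_2}^2=\sum_{\bm m}(\alpha)_{\bm m}^2 d_{\bm m}/(N+\gamma)_{\bm m}^2$; expanding $|h(w,z)|^{-2\alpha}$ as a double Faraut--Koranyi series in $(\bm m,\bm n)$, integrating in $z$ first, and invoking $K$-invariance of $\langle\cdot,\cdot\rangle_\gamma$ (so that $\mathcal{P}_{\bm m}\perp\mathcal{P}_{\bm n}$ for $\bm m\neq\bm n$) collapses the sum to the diagonal and reproduces the same series.

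The delicate step is term-by-term integration: the Faraut--Koranyi series converges uniformly only on compacta of $\Omega\times\overline\Omega$, not globally. In the stated Schatten range, however, the eigenvalue series converges absolutely, and because $K^{\bm m}(z,z)=\sum_j|e_{\bm m}^{(j)}(z)|^2\ge 0$ and $|K^{\bm m}(w,z)|^2\ge 0$, Tonelli's theorem lets me push the sum inside the integral; a further Fubini handles the mixed sum-over-$(\bm m,\bm n)$ / integral-over-$\Omega\times\Omega$ in the Hilbert--Schmidt step. Once this swap is justified, both formulae fall out of the computations above.
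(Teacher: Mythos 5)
Your proposal is correct in substance, but for the trace identities it follows a genuinely different route from the paper. The paper never expands $h(z,z)^{-\alpha}$ at this stage: it takes an eigenbasis $\{e_n\}$ from the prediagonal structure, writes $Tr(B_{\alpha,\gamma})=\sum_n\langle B_{\alpha,\gamma}e_n,e_n\rangle_\gamma$, uses the reproducing expansion of $K_{\gamma,z}$ and dominated convergence to convert this into $\int_\Omega\langle B_{\alpha,\gamma}K_{\gamma,z},K_{\gamma,z}\rangle_\gamma\,dv_\gamma(z)$, and then invokes the identity $B_{\alpha,\gamma}K_{\gamma,z}=h(\cdot,z)^{-\alpha}$ from (\ref{kkz}); the Hilbert--Schmidt formula is the same computation with $\langle B_{\alpha,\gamma}K_{\gamma,z},B_{\alpha,\gamma}K_{\gamma,z}\rangle_\gamma$. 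You instead evaluate both sides as Peter--Weyl series: the trace as $\sum_{\bm{m}}(\alpha)_{\bm{m}}d_{\bm{m}}/(N+\gamma)_{\bm{m}}$ and the integrals via the Faraut--Koranyi expansion and (\ref{bff}). Both work; the paper's argument avoids all series bookkeeping beyond (\ref{kkz}), while yours yields the explicit numerical value of the trace (consistent with the paper's subsequent remark) and reproves the $p=1,2$ thresholds along the way. Your observations that $\max\mathscr{F}=\frac{a}{2}(r-1)<\frac{N+1+2\gamma}{2}$ while $\max\mathscr{F}$ may exceed $1+\gamma$ are correct and explain the asymmetry between (1) and (2).

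Three points need tightening. First, Tonelli does not apply directly to $\sum_{\bm{m}}(\alpha)_{\bm{m}}K^{\bm{m}}(z,z)$ since $(\alpha)_{\bm{m}}$ may be negative; what you actually use is $\sum_{\bm{m}}|(\alpha)_{\bm{m}}|\,d_{\bm{m}}/(N+\gamma)_{\bm{m}}<\infty$ in the $S_1$ range, which dominates the termwise integrals and lets Fubini for series do the swap. Second, in the Hilbert--Schmidt step a single Fubini over the double index $(\bm{m},\bm{n})$ against $dv_\gamma\times dv_\gamma$ is not obviously justified up to the endpoint: the Cauchy--Schwarz majorant $\bigl(\sum_{\bm{m}}|(\alpha)_{\bm{m}}|\sqrt{d_{\bm{m}}}/(N+\gamma)_{\bm{m}}\bigr)^2$ converges only for $\alpha<\frac{N+2\gamma}{2}$. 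Do the exchange sequentially: fix $w$, use the uniform absolute convergence of the expansion on $B_r\times\overline{\Omega}$ (or Parseval for $h(\cdot,w)^{-\alpha}\in A^2(dv_\gamma)$) to integrate in the other variable and kill the cross terms, and only then apply Tonelli to the remaining nonnegative diagonal series. Third, your ``only if'' sweep via Lemma \ref{bksd} misses $\alpha=N+\gamma$: the Bergman projection is bounded but has eigenvalue $1$ with infinite multiplicity, hence lies in no $S_p$; this case should be excluded explicitly.
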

\begin{proof} (1) It suffices to prove the trace formula.  It implies from (\ref{kf0}) that the operator $B_{\alpha,\gamma}:A^2(dv_\gamma)\rightarrow A^2(dv_\gamma)$ is a prediagonal operator.  Let $\{\lambda_n\}$ be its characteristic  and  $\{e_n\}$ be an orthogonal base of $A^2(dv_\gamma) $  such that $B_{\alpha,\gamma} e_n=\lambda_n e_n $ for each $n.$ Since $B_{\alpha,\gamma}\in S_1,$ it follows from Lemma \ref{dgf} that $\{\lambda_n\}\in l^1.$  Then the dominated convergence theorem  implies  that
\begin{equation}\begin{split}
Tr(B_{\alpha,\gamma})&=\sum\langle B_{\alpha,\gamma} e_n,e_n\rangle_\gamma \notag\\
                   &=\int_{\Omega}\sum \langle B_{\alpha,\gamma} e_n,K_{\gamma,z}\rangle_\gamma\bar{e}_n(z) dv_\gamma(z)\\
                   &=\int_{\Omega}\sum \langle e_n \bar{e}_n(z) ,B_{\alpha,\gamma}^\ast K_{\gamma,z}\rangle_\gamma dv_\gamma(z)\\
                   &=\int_{\Omega}\langle \sum e_n \bar{e}_n(z) ,B_{\alpha,\gamma}^\ast K_{\gamma,z}\rangle_\gamma dv_\gamma(z)\\
                   %&=\int_{\Omega}\langle K_{\gamma,z} ,B_{\alpha,\gamma}^\ast K_{\gamma,z}\rangle dv(z)\\
                   &=\int_{\Omega}\langle B_{\alpha,\gamma} K_{\gamma,z} , K_{\gamma,z}\rangle_\gamma dv_\gamma (z).\\
                  % &
 \end{split}\end{equation}
 This along with  and (\ref{kkz}) shows that $$Tr(B_{\alpha,\gamma})=\int_{\Omega}h(z,z)^{-\alpha}dv_\gamma(z).$$

(2) Similarly, we have
\begin{equation}\begin{split}
Tr(B_{\alpha,\gamma}^\ast B_{\alpha,\gamma})&=\sum\langle B_{\alpha,\gamma}^\ast B_{\alpha,\gamma} e_n,e_n\rangle_\gamma\notag\\
             &=\int_{\Omega}\sum \langle B_{\alpha,\gamma}^\ast  B_{\alpha,\gamma} e_n,K_z\rangle_\gamma \bar{e}_n(z) dv_\gamma(z)\\
                   &=\int_{\Omega}\sum \langle e_n \bar{e}_n(z) ,B_{\alpha,\gamma}^\ast  B_{\alpha,\gamma} K_z\rangle_\gamma dv_\gamma(z)\\
                 %  &=\int_{\Omega}\langle \sum e_n \bar{e}_n(z) ,B_{\alpha,\gamma}^\ast B_{\alpha,\gamma} K_z\rangle dv(z)\\
                   %&=\int_{\Omega}\langle K_z ,B_{\alpha,\gamma}^\ast K_z\rangle dv(z)\\
                   &=\int_{\Omega}\langle  B_{\alpha,\gamma} K_z , B_{\alpha,\gamma} K_z\rangle_\gamma dv_\gamma(z)\\
                  &= \int_{\Omega\times\Omega} \vert h(w,z)\vert ^{-2\alpha}dv_\gamma(w)dv_\gamma(z).
 \end{split}\end{equation}
\end{proof}

\begin{rem}  In fact, the trace $Tr(B_{\alpha,\gamma})$ can be exactly calculated as follow. $$Tr(B_{\alpha,\gamma})=\frac{c_\gamma}{c_{\gamma-\alpha}}=\frac{\Gamma_{\Omega}(N+\gamma)}{\Gamma_\Omega(N+\gamma-\alpha)}\frac{\Gamma_\Omega(\frac{a}{2}(r-1)+1+\gamma-\alpha)}{\Gamma_{\Omega}(\frac{a}{2}(r-1)+1+\gamma)}.$$
\end{rem}
We first note that $Tr(B_{\alpha,\gamma}^\ast B_{\alpha,\gamma})<\infty $ if and only if $\alpha<\frac{N+1+2\gamma}{2}.$ Aa an application, we establish  an integral   estimate  related to the Forelli-Rudin estimate on the bounded symmetric domain. Denote $J_{\beta,\gamma}$ on $\Omega$ by
$$J_{\beta,\gamma}(z)=\int_\Omega\frac{h(w,w)^{\gamma}}{\vert h(z,w)\vert^{N+\beta+\gamma}}dv(w), \quad z\in\Omega, $$
 where of course we assume $\gamma>-1.$ The asymptotic estimation of $J_{\beta,\gamma}$  for $\vert \beta\vert >\frac{a}{2}(r-1)$ have been obtained in \cite{FK}; see \cite{EnZ,Yan} for the gap  $\vert \beta\vert \leq\frac{a}{2}(r-1)$ in some spacial cases.
\begin{cor} \label{FRe}The integral  $\int_{\Omega}J_{\beta,\gamma}dv<\infty$ if and only if $\beta<1.$
\end{cor}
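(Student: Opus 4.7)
The strategy is to express $\int_\Omega J_{\beta,\gamma}\,dv$ as a series of eigenvalue data for $B_{\alpha,\gamma}$ on the weighted Bergman space, exactly along the lines of the proof of Theorem \ref{thms}, and then to read off the threshold $\beta=1$ from the dominant $I_\Omega(1)$-partitions.

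First I will set $2\alpha=N+\beta+\gamma$, so that $|h(z,w)|^{-(N+\beta+\gamma)}=|h(z,w)^{-\alpha}|^{2}$. Identity (\ref{kkz}) says $B_{\alpha,\gamma}K_{\gamma,z}(w)=h(w,z)^{-\alpha}$, and combining this with Fubini gives
$$\int_\Omega J_{\beta,\gamma}(z)\,dv(z)=\frac{1}{c_\gamma}\int_\Omega\|B_{\alpha,\gamma}K_{\gamma,z}\|_{L^{2}(dv_\gamma)}^{2}\,dv(z),$$
so that, up to the constant $c_\gamma$, the integral in question is the Hilbert--Schmidt norm squared of $B_{\alpha,\gamma}$ viewed as an operator $L^{2}(dv_\gamma)\to L^{2}(dv)$.

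Second, I will evaluate $\|B_{\alpha,\gamma}K_{\gamma,z}\|_\gamma^{2}$ via the expansion $B_{\alpha,\gamma}K_{\gamma,z}=\sum_{\bm{m}\ge 0}(\alpha)_{\bm{m}}K^{\bm{m}}(\cdot,z)$ (from the Faraut--Kor\'anyi theorem \cite[Theorem 3.8]{FK}) and the prediagonal structure of Lemma \ref{poisp}. Orthogonality of the isotypic components $\mathcal{P}_{\bm{m}}$ under $\langle\cdot,\cdot\rangle_\gamma$, the conversion (\ref{bff}) between Bergman and Fischer--Fock inner products, and the reproducing property of $K^{\bm{m}}$ in the Fischer--Fock inner product together yield
$$\|B_{\alpha,\gamma}K_{\gamma,z}\|_\gamma^{2}=\sum_{\bm{m}\ge 0}\frac{(\alpha)_{\bm{m}}^{2}}{(N+\gamma)_{\bm{m}}}K^{\bm{m}}(z,z).$$
Integrating termwise by Tonelli and using $\int_\Omega K^{\bm{m}}(z,z)\,dv(z)=d_{\bm{m}}/(N)_{\bm{m}}$ (pick a Fischer--Fock orthonormal basis of $\mathcal{P}_{\bm{m}}$ and apply (\ref{bff}) with $\gamma=0$) I arrive at
$$\int_\Omega J_{\beta,\gamma}\,dv=\frac{1}{c_\gamma}\sum_{\bm{m}\ge 0}\frac{(\alpha)_{\bm{m}}^{2}\,d_{\bm{m}}}{(N+\gamma)_{\bm{m}}(N)_{\bm{m}}}.$$

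Third, the convergence of this series is handled by the asymptotics already used in Theorem \ref{thms}. Applying Stirling's formula twice, as in (\ref{asyy}), gives
$$\frac{(\alpha)_{\bm{m}}^{2}}{(N+\gamma)_{\bm{m}}(N)_{\bm{m}}}\sim\prod_{j=1}^{k}m_{j}^{2\alpha-2N-\gamma}=\prod_{j=1}^{k}m_{j}^{\beta-N}$$
as $I_\Omega(k)\ni\bm{m}\to\infty$, while Lemma \ref{dime} controls $d_{\bm{m}}$ by $\prod_{j=1}^{k}m_{j}^{(r-j)a+b}$. Substituting $N=2+(r-1)a+b$ collapses the summand to order $\prod_{j=1}^{k}m_{j}^{\beta-2-(j-1)a}$, and the sum over $\bm{m}\in I_\Omega(k)$ converges iff $\beta<1+(j-1)a$ for each $j\le k$. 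The binding constraint is $k=j=1$, giving $\beta<1$; the matching lower bound on $d_{\bm{m}}$ for $\bm{m}\in I_\Omega(1)$ from part (2) of Lemma \ref{dime} forces divergence of the $I_\Omega(1)$-part whenever $\beta\ge 1$, closing both directions.

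The main obstacle is the double bookkeeping between the two inner products on each $\mathcal{P}_{\bm{m}}$: the expansion of $B_{\alpha,\gamma}K_{\gamma,z}$ is naturally written with respect to Fischer--Fock, the norm $\|\cdot\|_\gamma$ uses (\ref{bff}) at weight $\gamma$, and $\int K^{\bm{m}}(z,z)\,dv$ uses (\ref{bff}) at weight $0$. One also has to note that $\beta\ge 1$ forces $\alpha$ to lie outside $\mathscr{F}$ (because $\max\mathscr{F}\le(r-1)a/2<(N+1+\gamma)/2$), so the $\sim$-asymptotic is a genuine two-sided estimate and not merely an upper bound.
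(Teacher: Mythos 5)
Your proof is correct, and it runs on the same engine as the paper's --- the Faraut--Kor\'anyi expansion, the Pochhammer asymptotics (\ref{asyy}) and the dimension estimates of Lemma \ref{dime} --- but it is organized differently. The paper stops at the expansion (\ref{jgz}), compares the weight-$\gamma$ coefficients with the weight-$0$ ones via Stirling so as to reduce the question to $\int_\Omega J_{\beta,0}\,dv<\infty$, and then quotes Proposition \ref{kss}(2): that unweighted double integral is $Tr(B_{\alpha',0}^{\ast}B_{\alpha',0})$ with $2\alpha'=N+\beta$, so finiteness is exactly the Hilbert--Schmidt criterion $\alpha'<\frac{N+1}{2}$, i.e. $\beta<1$. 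You instead integrate the expansion termwise (legitimate by Tonelli, since every term is nonnegative) and obtain the exact series $\int_\Omega J_{\beta,\gamma}\,dv=\frac{1}{c_\gamma}\sum_{\bm{m}\geq0}(\alpha)_{\bm{m}}^{2}d_{\bm{m}}/\bigl((N+\gamma)_{\bm{m}}(N)_{\bm{m}}\bigr)$ with $2\alpha=N+\beta+\gamma$, and then rerun the convergence analysis from the proof of Theorem \ref{thms} on this specific series. What your route buys: the mixed weights ($\gamma$ in the inner integral, $0$ in the outer) are handled in one stroke, you get a closed formula for the integral rather than only a finiteness criterion, and you bypass the coefficient-comparison step, which tacitly assumes that neither $\frac{N+\beta+\gamma}{2}$ nor $\frac{N+\beta}{2}$ makes Pochhammer symbols vanish; what the paper's route buys is brevity, since Proposition \ref{kss}(2) is already available. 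Two small points to tighten: your per-$k$ claim that the sum over $I_\Omega(k)$ converges ``iff $\beta<1+(j-1)a$ for each $j\le k$'' is not literally an equivalence on the ordered index set $m_1\geq\cdots\geq m_k>0$, but this is harmless because you only use the two directions you actually need --- the product upper bound for $\beta<1$ (all exponents $\beta-2-(j-1)a<-1$), and divergence of the $I_\Omega(1)$ block for $\beta\geq1$ via Lemma \ref{dime}(2) together with your correct observation that $\beta\geq1$ forces $\alpha\notin\mathscr{F}$, so the asymptotic is genuinely two-sided there; and in the convergence direction you should add one sentence noting that if $\alpha\in\mathscr{F}$ (possible only for very negative $\beta$) then all but finitely many terms vanish, so the series is trivially finite and (\ref{asyy}) is not needed.
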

\begin{proof} It follows from \cite[Corollary 3.7 and Theorem 3.8]{FK} that
\begin{equation}\begin{split}\label{jgz}
J_{\beta,\gamma}(z)&=\int_\Omega\frac{h(w,w)^{\gamma}}{\vert h(z,w)\vert^{N+\beta+\gamma}}dv(w)\\
                             &=\sum_{\bm{m}\geq0}\frac{\vert(\frac{N+\beta+\gamma}{2})_{\bm{m}}\vert^2}{(\gamma+N)_{\bm{m}}}K^{\bm{m}}(z,z)\\
                             &=\sum_{k=0}^r\sum_{\bm{m}\in I_\Omega(k)}\frac{\vert(\frac{N+\beta+\gamma}{2})_{\bm{m}}\vert^2}{(\gamma+N)_{\bm{m}}}K^{\bm{m}}(z,z)\\
\end{split}\end{equation}
 On the other hand, Stirling's formula implies that there exists a positive constant $C$ such that $$\frac{1}{ C}  \frac{\vert(\frac{N+\beta}{2})_{\bm{m}}\vert^2}{\vert (N)_{\bm{m}}\vert }\leq\frac{\vert(\frac{N+\beta+\gamma}{2})_{\bm{m}}\vert^2}{\vert (\gamma+N)_{\bm{m}}\vert }\leq C  \frac{\vert(\frac{N+\beta}{2})_{\bm{m}}\vert^2}{\vert (N)_{\bm{m}}\vert }
$$ for any $\bm{m}\geq0.$ Combing this with (\ref{jgz}) shows that  the integral  $\int_{\Omega}J_{\beta,\gamma}dv<\infty$ if and only if the integral  $\int_{\Omega}J_{\beta,0}dv<\infty.$ Then Proposition \ref{kss} implies that the integral  $\int_{\Omega}J_{\beta,0}dv<\infty$ if and only  if $\beta<1.$
It completes the proof.
\end{proof}
In the case of Hilbert unit ball, Corollary \ref{FRe} is immediate from the  Forelli-Rudin estimate \cite[Theorem 1.12]{Zhu3}. To end this section, we give a compactness result on $B_{\alpha,\gamma}.$

\begin{cor}\label{Bcpt} If $\alpha<N+1+2\gamma,$ then $B_{\alpha,\gamma}: L^\infty(dv_\gamma)\rightarrow L^1(dv_\gamma)$ is compact.
\end{cor}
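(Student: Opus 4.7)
The plan is to establish compactness by showing that the kernel $h(z,w)^{-\alpha}$ lies in $L^1(dv_\gamma\otimes dv_\gamma)$ under the stated hypothesis, and then exploiting the general principle that integral operators with $L^1$ kernels are compact from $L^\infty$ into $L^1$.

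For the integrability of the kernel, I would invoke Proposition \ref{kss}(2) with the parameter $\alpha/2$ substituted for $\alpha$. The equivalence ``$B_{\beta,\gamma}\in S_2$ iff $\beta<\tfrac{N+1+2\gamma}{2}$'' together with the accompanying trace formula gives
$$\int_{\Omega\times\Omega}|h(z,w)|^{-\alpha}\,dv_\gamma(z)\,dv_\gamma(w)=\mathrm{Tr}(B_{\alpha/2,\gamma}^{*}B_{\alpha/2,\gamma})<\infty$$
precisely when $\alpha<N+1+2\gamma$.

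To convert integrability into compactness, Fubini combined with the triangle inequality yields the estimate $\|T_k\|_{L^\infty(dv_\gamma)\to L^1(dv_\gamma)}\leq\|k\|_{L^1(dv_\gamma\otimes dv_\gamma)}$ for any measurable kernel $k$. I would then exhaust $\Omega$ by the sets $\Omega_n:=\{z\in\Omega:h(z,z)\geq 1/n\}$, which are compact subsets of $\Omega$ since $h(z,z)\to 0$ as $z\to\partial\Omega$, and consider the truncated kernels $k_n(z,w):=h(z,w)^{-\alpha}\chi_{\Omega_n\times\Omega_n}(z,w)$. Since $h$ is nowhere zero on $\Omega\times\Omega$ (because $h^{-N}$ is the Bergman kernel), continuity on the compact set $\Omega_n\times\Omega_n$ ensures each $k_n$ is bounded and supported on a set of finite $dv_\gamma\otimes dv_\gamma$ measure; hence the associated operator $T_{k_n}\colon L^2(dv_\gamma)\to L^2(dv_\gamma)$ is Hilbert-Schmidt, and in particular compact. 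Combining with the continuous inclusions $L^\infty(dv_\gamma)\hookrightarrow L^2(dv_\gamma)\hookrightarrow L^1(dv_\gamma)$ (valid because $dv_\gamma$ is a probability measure) shows that $T_{k_n}$ is compact as a map $L^\infty(dv_\gamma)\to L^1(dv_\gamma)$.

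Finally, by dominated convergence $\|k-k_n\|_{L^1(dv_\gamma\otimes dv_\gamma)}\to 0$ as $n\to\infty$, so $T_{k_n}\to B_{\alpha,\gamma}$ in the operator norm from $L^\infty(dv_\gamma)$ to $L^1(dv_\gamma)$. Since the space of compact operators is closed, $B_{\alpha,\gamma}$ itself is compact. I do not anticipate any major obstacle: everything reduces to the already-established trace formula together with a standard kernel-truncation argument, the only mildly delicate point being the positivity of $|h(z,w)|$ on the compact exhaustion sets.
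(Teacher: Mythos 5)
Your proposal is correct, and its key quantitative input is the same as the paper's: the hypothesis $\alpha<N+1+2\gamma$ is used, via Proposition \ref{kss}(2) applied with $\alpha/2$ in place of $\alpha$, to conclude that $\iint_{\Omega\times\Omega}\vert h(z,w)\vert^{-\alpha}\,dv_\gamma(z)\,dv_\gamma(w)<\infty$. (One small overstatement: Proposition \ref{kss} as stated gives the trace identity only when the operator is in $S_2$, so you really get the implication you need rather than the ``precisely when''; that is harmless here, and note also that for $\alpha\le 0$ the kernel is bounded and the claim is trivial.) Where you genuinely diverge from the paper is in how integrability is converted into compactness. The paper fixes a bounded sequence in $L^\infty(dv_\gamma)$, uses Lemma \ref{hol} plus uniform continuity of $h(z,w)^{-\alpha}$ on $\overline{B_j'}\times\overline{\Omega}$ and Arzel\`a--Ascoli to get precompactness of the image functions in $C(\overline{B_j'})$ on an exhaustion by spectral balls, and then invokes the $L^p$-precompactness criterion of Lemma \ref{AdF}, with Proposition \ref{kss} supplying the uniformly small tails $\int_{\Omega\setminus B_j'}$. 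You instead truncate the kernel to the compact sets $\{h(z,z)\ge 1/n\}$ (these are indeed compact and exhaust $\Omega$, since $h(z,z)>0$ on $\Omega$ and vanishes on $\partial\Omega$, and $h$ is zero-free on $\Omega\times\Omega$), observe that the truncated operators are Hilbert--Schmidt on $L^2(dv_\gamma)$ and hence compact from $L^\infty$ to $L^1$ via the inclusions, and conclude by the elementary bound $\Vert T_k\Vert_{L^\infty\to L^1}\le\Vert k\Vert_{L^1(dv_\gamma\otimes dv_\gamma)}$, dominated convergence, and norm-closedness of the compact operators. Your route is more self-contained functional analysis and avoids both Arzel\`a--Ascoli and Lemma \ref{AdF}; the paper's route, at the cost of that machinery, also exhibits the locally uniform (indeed $C(\overline{B_j'})$) precompactness of the image functions, which reflects the holomorphy of $B_{\alpha,\gamma}f$ established in Lemma \ref{hol}. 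Both proofs are sound.
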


To prove the above corollary, we need the following lemma  which provides a criteria of precompactness in $L^p(dv_\gamma)$ with $\gamma>-1$ and $1\leq p<\infty,$ whose proof is similar to  \cite[Theorem 2.33]{AdF}. Recall that a subset in a Banach space is called precompact if its closure  in the norm topology is compact. Obviously, an operator between two Banach spaces is compact if and only if the operator maps every bounded subset to precompact one; in particular, the compact operators are all bounded.

\begin{lem}\cite{AdF} \label{AdF} Let $1\leq p<\infty$ and $E\subset L^p(dv_\gamma).$ Suppose there exists a sequence $\{\Omega_j\}$ of subdomains of $\Omega$ having the following properties:
\begin{enumerate}
\item  $\Omega_j\subset\Omega_{j+1}.$
\item The set of restrictions to $\Omega_j$ of the functions in $E$ is precompact in  $L^p(\Omega_j,dv_\gamma)$ for each $j.$
\item For every $\varepsilon>0$ there exists a $j$ such that $$ \int_{\Omega-\Omega_j}\vert f\vert^pdv_\gamma<\varepsilon,\quad\forall f \in E.$$
\end{enumerate}
Then $E$ is precompact in $ L^p(dv_\gamma).$
\end{lem}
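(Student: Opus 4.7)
The plan is to verify Lemma \ref{AdF} by the standard total-boundedness argument: in the complete metric space $L^p(dv_\gamma)$, a subset is precompact precisely when it admits a finite $\varepsilon$-net for every $\varepsilon>0$, so I only need to construct such nets by gluing a \emph{local} net on $\Omega_j$ with a \emph{tail} bound on $\Omega\setminus\Omega_j$.

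Fix $\varepsilon>0$. First, I would invoke hypothesis (3) to choose an index $j$ large enough so that
\[
\int_{\Omega\setminus\Omega_j}|f|^p\,dv_\gamma<\Bigl(\tfrac{\varepsilon}{2}\Bigr)^p \qquad\text{for every } f\in E.
\]
Next, by hypothesis (2), the restriction set $E|_{\Omega_j}:=\{f|_{\Omega_j}:f\in E\}$ is precompact in $L^p(\Omega_j,dv_\gamma)$, hence totally bounded; choose a finite $\tfrac{\varepsilon}{2}$-net $\{g_1,\dots,g_M\}\subset L^p(\Omega_j,dv_\gamma)$ for $E|_{\Omega_j}$. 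Define $\widetilde g_k\in L^p(\Omega,dv_\gamma)$ by extending $g_k$ by zero to $\Omega\setminus\Omega_j$.

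Now given $f\in E$, pick $k$ with $\|f|_{\Omega_j}-g_k\|_{L^p(\Omega_j,dv_\gamma)}<\tfrac{\varepsilon}{2}$. Since $\widetilde g_k\equiv 0$ off $\Omega_j$, the additivity of the integral over the disjoint decomposition $\Omega=\Omega_j\sqcup(\Omega\setminus\Omega_j)$ gives
\[
\|f-\widetilde g_k\|_{L^p(dv_\gamma)}^p
=\|f|_{\Omega_j}-g_k\|_{L^p(\Omega_j,dv_\gamma)}^p+\|f\|_{L^p(\Omega\setminus\Omega_j,dv_\gamma)}^p
<2\Bigl(\tfrac{\varepsilon}{2}\Bigr)^p.
\]
Since $p\ge1$, we have $2^{1/p}\le 2$, so $\|f-\widetilde g_k\|_{L^p(dv_\gamma)}<2^{1/p}\cdot\tfrac{\varepsilon}{2}\le\varepsilon$. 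Thus $\{\widetilde g_1,\dots,\widetilde g_M\}$ is a finite $\varepsilon$-net for $E$, proving total boundedness, which in the complete metric space $L^p(dv_\gamma)$ is equivalent to precompactness. (Hypothesis (1) that $\Omega_j\subset\Omega_{j+1}$ is not literally used in this argument, but it is harmless; it merely ensures that the tail condition (3) is monotone in $j$, making such a choice of $j$ meaningful.)

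The main conceptual ingredient is the separation of scales: local compactness furnishes the finite net on the compactly exhausted piece $\Omega_j$, while the uniform tail smallness eliminates the contribution from the complement. There is no real obstacle beyond being careful with the $p$-th power arithmetic and ensuring the zero extension of $g_k$ is the correct object to compare against. Since the statement is cited from \cite{AdF}, this amounts to reproducing the standard Riesz--Fréchet--Kolmogorov type exhaustion argument adapted to the weighted measure $dv_\gamma$; no feature of the bounded symmetric domain $\Omega$ (such as the Bergman kernel, spectral norm, or group structure) enters the proof.
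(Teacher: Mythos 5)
Your argument is correct and is exactly the standard exhaustion/total-boundedness proof that the paper delegates to \cite[Theorem 2.33]{AdF} (the paper gives no proof of its own, only the citation): glue a finite $\tfrac{\varepsilon}{2}$-net on $\Omega_j$ from hypothesis (2) with the uniform tail bound from hypothesis (3), and conclude via completeness of $L^p(dv_\gamma)$. The $p$-th power bookkeeping and the observation that the nesting in (1) is only needed to make the tail condition monotone are both accurate.
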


 {\noindent{\bf{Proof of  Corollary \ref{Bcpt}.}}
 Suppose $\{f_j\}$ is  an arbitrary bounded sequence in $L^\infty(dv_\gamma),$ without loss of generality, we can suppose that
   \begin{equation}\label{bds} \Vert f_j\Vert_{L^\infty} \leq 1,\quad j=1,2,\cdots.\notag\\ \end{equation}
   Denote the  bounded domain $B_j'$ by $B_j'=\{z\in\mathbb{C}^d:\vert z\vert_{sp}<1-\frac{1}{j}\},j=1,2,\cdots.$ Clearly,   $B_j'$ is compactly contained in $\Omega$ and $B_j'\subset B_{j+1}'\subset \Omega,$ for every $j.$ We first prove that the  set of restrictions to $B_j'$ of the functions in $\{B_{\alpha,\gamma}f_n\}$ is precompact in  $L^p(B_j')$ for each $j.$ In view of Lemma \ref{hol}, we know the functions in $\{B_{\alpha,\gamma}f_n\}$ are all continuous on  $\Omega$ and uniformly  continuous  on every $\overline{B_j'}.$ Combing with the fact that the embedding $C(\overline{B_j'})\subset L^p(B_j',dv_\gamma)$ is continuous for every $j,$ it is enough to prove $\{(B_{\alpha,\gamma}f_n)|_{B_j'}\}$ is precompact in $C(\overline{B_j'})$ for every $j.$
Note that $h(z,w)^{-\alpha}$ is holomorphic on $\overline{B_j'}\times\overline{\Omega}\subset\Omega\times\overline{\Omega},$ so  $h(z,w)^{-\alpha}$ is uniformly  continuous   on $\overline{B_j'}\times\overline{\Omega}$ and there exists $C_j>0$ such that $$\vert h(z,w)^{-\alpha}\vert \leq C_j$$ on $\overline{B_j'}\times\overline{\Omega}$ for every $j.$  We then have that
\begin{equation}\begin{split}\label{Kaf}
\Vert ( B_{\alpha,\gamma} f_n)|_{B_j'}\Vert_{L^\infty}&=\sup_{z\in \overline{B_j'}}\vert B_{\alpha,\gamma} f_n\vert \\
&=\sup_{z\in \overline{B_j'}}\left\vert\int_{\Omega}\frac{f_n(w)dv_\gamma(w)}{h(z,w)^\alpha}\right\vert\\
 &\leq C_j\Vert f_n\Vert_{L^\infty}\\
&\leq C_j
\end{split}\end{equation} for every $j.$ The estimate (\ref{Kaf}) implies that  $\{(B_{\alpha,\gamma}f_n)|_{B_j'}\}$ are uniformly bounded in $C(\overline{B_j'})$ for every $j.$ Meanwhile, the uniform continuity of the function  $h(z,w)^{-\alpha}$   on $\overline{B_j'}\times\overline{\Omega}$ implies that $\{(B_{\alpha,\gamma}f_n)|_{B_j'}\}$ is equicontinuous on $\overline{B_j'}$ for every $j.$ Then  Arzel\`a-Ascoli theorem implies that $\{(B_{\alpha,\gamma}f_n)|_{B_j'}\}$ is  precompact in $C(\overline{B_j'})$ for every $j.$ Due to Proposition \ref{kss}
 we obtain  that, for any fixed $\alpha<N+1+2\gamma$ and for any $\varepsilon>0,$ there exists a $J>0$ satisfying \begin{equation}\label{ie} \int_{\Omega-B_j'} dv_\gamma(z)\vert h(z,w)\vert ^{-\alpha}dv_\gamma(w) <\varepsilon,\quad \forall j>J,\end{equation}  since  the absolute continuity of the integral and $\lim_{j\rightarrow\infty}v_\gamma(\Omega-B_j')=0.$  It follows from (\ref{ie}) that
 \begin{equation}\begin{split}
\int_{\Omega-B_j'}\vert B_{\alpha,\gamma}f_n(z)\vert dv_\gamma(z)&=\int_{\Omega-B_j'}\left\vert\int_{\Omega}\frac{f_n(w)dv_\gamma(w)}{h(z,w)^\alpha}\right\vert dv_\gamma(z)\notag\\
&\leq \Vert f_n\Vert_{L^\infty}\int_{\Omega-B_j'} dv_\gamma(z) \int_{\Omega}\vert h(z,w)^{-\alpha}\vert dv_\gamma(w)\\
&\leq \varepsilon,\\
\end{split}\end{equation}
for any $j>J$ and $n=1,2,\cdots.$ Combing with  Lemma \ref{AdF} it implies that  $\{B_{\alpha,\gamma}f_n\}$ is precompact in $L^1(dv_\gamma).$   Thus $B_{\alpha,\gamma}: L^\infty(dv_\gamma)\rightarrow L^1(dv_\gamma)$ is compact. It completes the proof.
\qed

\section{Schatten class Szeg\"o-type  operators}

We have  characterized the Schatten class  Bergman-type operators on the Bergman space $A^2(dv_\gamma)$ so far. The above argument can be used for the  characterization of Schatten class of the so-called Szeg\"o-type integral operators on Hardy spaces.  Let $S$ be the Shilov boundary of $\Omega$ with normalized $K$-invariant Haar measure $d\sigma,$  the Hardy space $H^2(\Omega)$ is consists of  all holomorphic functions on $\Omega$ satisfying 
$$\sup_{R\rightarrow 1^-}\int_{S}\vert f(Rz)\vert^2d\sigma(z)<\infty.$$
  There exists a canonical unitary  isomorphism between $H^2(\Omega)$ and a closed subspace $H^2(S)$ of  $L^2(S,d\sigma);$  see \cite{Kor,Up,UpW} for details
about Hardy spaces. Thus we can identify  $H^2(\Omega)$ with  $H^2(S).$ Szeg\"o kernel of $\Omega$ is given by
$$S(z,\xi)=h(z,\xi)^{-\rho}, \quad (z,\xi)\in \Omega\times S,$$ where $\rho:=1+\frac{a}{2}(r-1)+b.$
\begin{defi} For $\alpha\in \mathbb{R},$  the Szeg\"o-type integral operator $H_\alpha$ on $L^1(S,d\sigma)$ is defined by
$$ H_\alpha f(z)=\int _{S}\frac{f(\xi)}{h(z,\xi)^\alpha}d\sigma(\xi),\quad z\in\Omega.$$
\end{defi}
Note that $H_\rho$ is the Szeg\"o projection  from $L^2(S)$ onto $H^2(\Omega).$ The same  argument in Lemma \ref{bksd} shows that $ H_\alpha:L^2(S)\rightarrow L^2(S)$ is bounded if and only if $ H_\alpha:H^2(\Omega)\rightarrow H^2(\Omega)$ is bounded if and only if $\alpha\leq \rho.$ Denote the set $\mathscr{S}$  by $$
\mathscr{S}:=\{s< \rho:s\notin\mathscr{F}\}. $$

 %Note that $$\max\hspace{0.5mm} \mathscr{F}<N.$$ It is easy to verity that $\alpha\in\mathscr{F}$ if and only if $(\alpha)_{\bm{m}}=0$ for all but finitely many integer partitions $\bm{m}\geq0.$
Now we list the main results of Schatten class of Szeg\"o-type  operators, the proof is similar to the case of Bergman-type case and omitted.
\begin{thm}\label{thmff} If $\alpha\in\mathscr{F},$ then the followings hold.
\begin{enumerate}
\item  The operator $H_\alpha\in S_p(L^2(S))$ for any $p>0.$
\item   The operator $H_\alpha\in S_p(H^2(\Omega))$ for any $p>0.$
\end{enumerate}
\end{thm}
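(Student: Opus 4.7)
The plan is to mirror the argument that gave Theorem \ref{thmf} — namely, prove a Szeg\"o analog of Lemma \ref{hol} and of Proposition \ref{fir}, and then observe that, under $\alpha\in\mathscr{F}$, the operator $H_\alpha$ is finite rank on $L^2(S)$, hence on $H^2(\Omega)$, and therefore lies in every Schatten class $S_p$ for $0<p<\infty$.

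First I would establish holomorphy of $H_\alpha f$ on $\Omega$ for any $f\in L^1(S,d\sigma)$. The key ingredient is the Faraut-Kor\'anyi expansion
\begin{equation*}
h(z,\xi)^{-\alpha}=\sum_{\bm{m}\geq 0}(\alpha)_{\bm{m}}K^{\bm{m}}(z,\xi),
\end{equation*}
which, by \cite[Theorem 3.8]{FK}, converges uniformly and absolutely on $B_r\times S$ for any $0<r<1$ (note that $S\subset\overline{\Omega}$ is compact). Dominated convergence then gives
\begin{equation*}
H_\alpha f(z)=\sum_{\bm{m}\geq 0}(\alpha)_{\bm{m}}\int_S f(\xi)K^{\bm{m}}(z,\xi)\,d\sigma(\xi),\qquad z\in B_r,
\end{equation*}
each summand being a holomorphic polynomial in $z$, so that $H_\alpha f$ is holomorphic on $\Omega$. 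Exactly as in the proof of Lemma \ref{bksd}, this holomorphy together with the fact that the Szeg\"o projection $H_\rho$ is the orthogonal projection of $L^2(S)$ onto $H^2(\Omega)$ yields the operator identity $H_\alpha H_\rho=H_\alpha$ on $L^2(S)$, i.e., $H_\alpha$ vanishes on $(H^2(\Omega))^\perp$. Thus Schatten membership of $H_\alpha$ on $L^2(S)$ is equivalent to Schatten membership on $H^2(\Omega)$.

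Next, using the Peter-Schmid-Weyl decomposition $H^2(\Omega)=\bigoplus_{\bm{m}\geq 0}\mathcal{P}_{\bm{m}}(\mathfrak{p}^+)$ (orthogonal with respect to the Szeg\"o/Hardy inner product), I would invoke Schur's lemma exactly as in (\ref{bff}): the ratio of the Hardy inner product to the Fischer-Fock inner product is a constant on each irreducible $K$-module $\mathcal{P}_{\bm{m}}(\mathfrak{p}^+)$, and this constant is $(\rho)_{\bm{m}}^{-1}$ by the reproducing property of the Szeg\"o kernel $S(z,\xi)=h(z,\xi)^{-\rho}$. Running the same $K$-invariance/integration calculation that led to (\ref{kf0}) for the measure $d\sigma$ on the Shilov boundary $S$ shows that $H_\alpha$ acts diagonally on this decomposition, with eigenvalue $\frac{(\alpha)_{\bm{m}}}{(\rho)_{\bm{m}}}$ on $\mathcal{P}_{\bm{m}}(\mathfrak{p}^+)$. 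In other words, $H_\alpha:H^2(\Omega)\to H^2(\Omega)$ is the prediagonal operator with characteristic $\bigl\{\tfrac{(\alpha)_{\bm{m}}}{(\rho)_{\bm{m}}}\bigr\}$.

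Now the hypothesis $\alpha\in\mathscr{F}=\{\tfrac{a}{2}(l-1)-k:1\le l\le r,\,k\in\mathbb{N}\}$ is precisely the condition that $(\alpha)_{\bm{m}}=0$ for all but finitely many partitions $\bm{m}\ge 0$, as noted just after the definition of $\mathscr{F}$. Hence only finitely many eigenvalues $\tfrac{(\alpha)_{\bm{m}}}{(\rho)_{\bm{m}}}$ are nonzero, each corresponding to the finite-dimensional isotypic component $\mathcal{P}_{\bm{m}}(\mathfrak{p}^+)$ of $H^2(\Omega)$, so $H_\alpha$ has finite rank on $H^2(\Omega)$. By the operator identity $H_\alpha=H_\alpha H_\rho$, the same holds on $L^2(S)$. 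Since every finite rank operator on a Hilbert space belongs to $S_p$ for every $0<p<\infty$, both (1) and (2) follow.

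The main obstacle is the identification of the diagonal action: one needs the precise Schur constant $(\rho)_{\bm{m}}^{-1}$ for the Hardy/Szeg\"o inner product on $\mathcal{P}_{\bm{m}}(\mathfrak{p}^+)$, which is the Szeg\"o counterpart of the Faraut-Kor\'anyi identity (\ref{bff}) and requires the $K$-invariant polar integration formula on the Shilov boundary; this is standard (see \cite{FK,Kor,UpW}) but is the place where the argument departs from the Bergman-measure calculation already used in Lemma \ref{bksd}. Once this constant is in hand, the rest of the argument is structurally identical to the proof of Theorem \ref{thmf}.
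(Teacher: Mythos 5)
Your proposal is correct and is essentially the proof the paper intends: the paper omits the argument, stating it is "similar to the Bergman-type case," and your adaptation (holomorphy of $H_\alpha f$ via the Faraut--Kor\'anyi expansion, the identity $H_\alpha H_\rho=H_\alpha$, the prediagonal action with eigenvalues $\frac{(\alpha)_{\bm{m}}}{(\rho)_{\bm{m}}}$, and finite rank from $(\alpha)_{\bm{m}}=0$ for all but finitely many $\bm{m}$) is exactly that adaptation, with the Schur constant $(\rho)_{\bm{m}}^{-1}$ correctly replacing $(N+\gamma)_{\bm{m}}^{-1}$.
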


\begin{thm}\label{thmss} Suppose $ \alpha\in\mathscr{S} $ and $0 < p<\infty,$  then the following statements are equivalent.
\begin{enumerate}
\item $H_\alpha\in S_p(L^2(S)).$
\item $H_\alpha\in S_p(H^2(\Omega)).$
\item $\widetilde{H_\alpha}\in L^p(d\lambda).$
\item $p>\frac{N-1}{\rho-\alpha}.$
\end{enumerate}
 \end{thm}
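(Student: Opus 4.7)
\emph{Proof plan for Theorem \ref{thmss}.} The strategy is to follow the proofs of Lemma \ref{bksd}, Lemma \ref{poisp} and Theorem \ref{thms} essentially verbatim, with the Bergman parameter $N+\gamma$ replaced by the Szeg\"o parameter $\rho = 1+\frac{a}{2}(r-1)+b$ throughout. First I would establish the Szeg\"o analog of the diagonalization formula (\ref{kf0}): the Hardy space $H^2(\Omega)\cong H^2(S)$ admits a Peter-Schmid-Weyl decomposition whose summands $\mathcal{P}_{\bm{m}}(\mathfrak{p}^+)$ are irreducible under $K$, so by Schur's lemma the ratio of the Hardy inner product to the Fischer-Fock inner product on each $\mathcal{P}_{\bm{m}}$ is a constant; evaluating on the Szeg\"o reproducing kernel identifies this constant as $1/(\rho)_{\bm{m}}$. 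Expanding $h(z,\xi)^{-\alpha}=\sum_{\bm{m}\geq 0}(\alpha)_{\bm{m}}K^{\bm{m}}(z,\xi)$, applying the $K$-invariant integral formula from \cite{FK} as in (\ref{kf0}), and using Schur's lemma then yield
$$H_\alpha f(z)=\sum_{\bm{m}\geq 0}\frac{(\alpha)_{\bm{m}}}{(\rho)_{\bm{m}}}f_{\bm{m}}(z),\qquad f=\sum_{\bm{m}\geq 0}f_{\bm{m}}\in H^2(\Omega),$$
so $H_\alpha\colon H^2(\Omega)\to H^2(\Omega)$ is prediagonal with characteristic $\bigl\{(\alpha)_{\bm{m}}/(\rho)_{\bm{m}}\bigr\}$, each value appearing with multiplicity $\dim\mathcal{P}_{\bm{m}}(\mathfrak{p}^+)$.

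Next I would prove the operator identity $H_\alpha H_\rho = H_\alpha$ on $L^2(S)$ by repeating the argument of Lemma \ref{bksd}: the Szeg\"o projection $H_\rho$ is the orthogonal projection from $L^2(S)$ onto $H^2(S)$, and $H_\alpha$ vanishes on the orthogonal complement because, for any $f\in (H^2(S))^\perp$, the series expansion of the Szeg\"o-type kernel forces $H_\alpha f\equiv 0$ on a Shilov neighborhood of zero, hence everywhere by holomorphy. This transfers prediagonality to $H_\alpha\colon L^2(S)\to L^2(S)$ with characteristic $\bigl\{(\alpha)_{\bm{m}}/(\rho)_{\bm{m}}\bigr\}\cup\{0\}$, and Lemma \ref{dgf} immediately gives the equivalence of (1), (2), and the statement that $\bigl\{(\alpha)_{\bm{m}}/(\rho)_{\bm{m}}\bigr\}\in\ell^p$.

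For the numerical equivalence with (4), Stirling's formula applied to (\ref{poc}) with $N+\gamma$ replaced by $\rho$ yields the asymptotic
$$\left|\frac{(\alpha)_{\bm{m}}}{(\rho)_{\bm{m}}}\right|\sim\prod_{j=1}^{k}\frac{1}{m_j^{\rho-\alpha}}\quad\text{as }I_\Omega(k)\ni\bm{m}\to\infty,\quad 1\leq k\leq r,$$
exactly as in (\ref{asyy}). Combining this with the dimension estimates of Lemma \ref{dime} and summing over $I_\Omega(k)$ for $0\leq k\leq r$, the upper and lower sum estimates at the end of the proof of Theorem \ref{thms} carry over word for word to give $\bigl\{(\alpha)_{\bm{m}}/(\rho)_{\bm{m}}\bigr\}\in\ell^p$ if and only if $p(\rho-\alpha)-(r-1)a-b>1$, i.e.\ $p>\frac{N-1}{\rho-\alpha}$; the hypothesis $\alpha\in\mathscr{S}$ guarantees $(\alpha)_{\bm{m}}\neq 0$ for infinitely many $\bm{m}$, so the lower bound is not vacuous.

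For (3)$\Leftrightarrow$(4), I would mimic Lemma \ref{bere}: using the normalized Szeg\"o kernel $s_z(w)=h(w,z)^{-\rho}/h(z,z)^{-\rho/2}$, the reproducing property in $H^2(\Omega)$ gives $H_\alpha S_z(w)=h(w,z)^{-\alpha}$ and hence
$$\widetilde{H_\alpha}(z)=\langle H_\alpha s_z,s_z\rangle=h(z,z)^{\rho-\alpha}.$$
Then $\widetilde{H_\alpha}\in L^p(d\lambda)$ reduces to $\int_\Omega h(z,z)^{p(\rho-\alpha)-N}dv(z)<\infty$, which by the criterion $\int_\Omega h^t dv<\infty\iff t>-1$ already recorded in the proof of Theorem \ref{thms} is equivalent to $p>\frac{N-1}{\rho-\alpha}$. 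The only genuinely new technical input beyond the Bergman proof is the identification of the inner-product ratio on $H^2(\Omega)$ as $1/(\rho)_{\bm{m}}$, which is the sole place where the Szeg\"o framework diverges from the weighted Bergman one; once this is in hand, every other step is a transcription. This is why the author omits the proof.
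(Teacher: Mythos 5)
Your plan is correct and coincides with the proof the paper has in mind: the paper omits it precisely because it is the Bergman argument of Lemma \ref{bksd}, Lemma \ref{poisp}, Lemma \ref{bere} and Theorem \ref{thms} transcribed with $N+\gamma$ replaced by $\rho$, which is exactly what you do, and you correctly identify the one genuinely new ingredient, namely that the Hardy/Fischer--Fock inner-product ratio on $\mathcal{P}_{\bm{m}}(\mathfrak{p}^+)$ is $1/(\rho)_{\bm{m}}$ (Faraut--Kor\'anyi, since $\rho=d/r$), giving the characteristic $\{(\alpha)_{\bm{m}}/(\rho)_{\bm{m}}\}$ and the Berezin transform $h(z,z)^{\rho-\alpha}$. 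The only cosmetic slip is the phrase ``Shilov neighborhood of zero'': the vanishing of $H_\alpha f$ for $f\perp H^2(S)$ should be argued on a spectral ball $B_r\subset\Omega$ (where the kernel expansion converges uniformly on $B_r\times S$), then extended by the identity theorem, exactly as in (\ref{kfff}).
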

Where  $\widetilde{H_\alpha}$ is the Berezin transform of $H_\alpha$ on Hardy space $H^2(\Omega).$

%{\noindent{\bf{Acknowledgements.}}
%The author would  like to thank  Kai Wang for his helpful  discussions.
\bibliographystyle{plain}

\begin{thebibliography}{99}
\small
\bibitem{AdF}
A. Adama, F. Fournier, \emph{ Sobolev Spaces}, Pure and Applied Mathematics (Amsterdam), Elsevier/Academic Press, Amsterdam (2003)
\bibitem{BB}
D. B\'ekoll\'e, A. Bonami, \emph{ Estimates for the Bergman and Szeg\"o projections in two symmetric domains of $\mathbb{C}^n$,} Colloq. Math. \textbf{68}, 81-100 (1995)
\bibitem{BGN}
 A. Bonami, G. Garrig\'os,  C. Nana, \emph{$L^p$-$L^q$ estimates for Bergman projections in bounded symmetric domains of tube type,} J. Geom. Anal. \textbf{24}, 1737-1769 (2014)
 %\bibitem{BKV}
%\bibitem{AdF}
%A. Adama, F. Fournier, \emph{ Sobolev Spaces}, Pure and Applied Mathematics (Amsterdam), Elsevier/Academic Press, Amsterdam (2003)
%\bibitem{AK}
%P. Akilov, V. Kantorovich,  \emph{Functional analys,} Pergamon, Oxford, (1982)
%\bibitem{An}
%T. Ando,  \emph{On the compactness of integral operators,} Indag. Math., \textbf{24}, 235-239 (1962)
%\bibitem{As}
%U. Ashraf, M.  Asif and A. Meskhi, \emph{Boundedness and compactness of positive integral operators on cones of homogeneous groups,} Positivity, \textbf{13}, 497-518 (2009)

%\bibitem{Bo}
%H. Boas,  \emph{Sobolev space projections in strictly pseudoconvex domains,}  Trans. Amer. Math. Soc. \textbf{288,} 227-240  (1985)
%\bibitem{BB}
%D. B\'ekoll\'e, A. Bonami, \emph{ Estimates for the Bergman and Szeg\"o projections in two symmetric domains of $\mathbb{C}^n$,} Colloq. Math. \textbf{68}, 81-100 (1995)
%\bibitem{BGN}
 %A. Bonami, G. Garrig\'os,  C.Nana, \emph{$L^p$-$L^q$ estimates for Bergman projections in bounded symmetric domains of tube type,} J. Geom. Anal. \textbf{24}, 1737-1769 (2014)
 %\bibitem{BKV}
% M. Buckley, P. Koskela,  D. Vukoti\'c, \emph{Fractional integration, differentiation, and weighted Bergman spaces,} Math. Proc. Cambridge Philos. Soc. \textbf{126}, 369-385 (1999)
\bibitem{CF}
G. Cheng, X. Fang, Z. Wang, J. Yu, \emph{The hyper-singular cousin of the Bergman projection,} Trans. Amer. Math. Soc. \textbf{369}, 8643-8662 (2017)

\bibitem{Car}
E. Cartan, \emph{Sur les domaines born\'es homog\'enes de l\'espace den variables complexes} (French), Abh. Math. Sem. Univ. Hamburg, \textbf{11}, 116-162 (1935)
\bibitem{CH}
G. Cheng, X. Hou, C. Liu, \emph{The singular integral operator induced by Drury-Arveson kernel}, Complex Anal. Oper. Theory, \textbf{12}, 917-929 (2018)
%\bibitem{CP}
%F. Cobos, J. Peetre, \emph{Interpolation of compactness using Aronszajn-Gagliardo functors,} Israel J. Math. \textbf{68},  220-240 (1989)
%\bibitem{DH}
%Y. Deng, L. Huang, T. Zhao, D.  Zheng, \emph{Bergman projection and Bergman spaces,} J. Operator Theory, \textbf{41}, 3-24 (2001)
\bibitem{Ding}
L. Ding, J. Fan, \emph{On the  compactness  of Bergman-type  integral  operators,} arXiv:2004.13635, (2020)
\bibitem{DiW}
L. Ding, K. Wang, \emph{The  $L^p$-$L^q$ problems of Bergman-type operators,} arXiv:2003.00479, (2018)
\bibitem{EnZ}
M. Englis, G. Zhang, \emph{On the Faraut-Koranyi hypergeometric functions in rank two,} Ann. Inst. Fourier (Grenoble), \textbf{54,} 1855-1875 (2005)
%\bibitem{EMO}
%A. Erd\'elyi, W. Magnus, F. Oberhettinger, F. G. Tricomi,  \emph{Higher Transcendental Functions, Vols. I, II}, McGraw-Hill, New York (1953)
\bibitem{FK}
J. Faraut, A.Kor\'anyi,\emph{ Function spaces and reproducing kernels on bounded symmetric domains, } J. Funct. Anal., \textbf{88,} 64-89 (1990)
\bibitem{FR}
F. Forelli, W.  Rudin, \emph{Projections on spaces of holomorphic functions in balls,} Indiana Univ. Math. J. \textbf{24}, 593-602 (1974)
\bibitem{Hel}
 S. Helgason, \emph{Differential Geometry, Lie Groups, and Symmetric Spaces,}  Academic Press, New York, (1978)
 \bibitem{JaP}
M. Jarnicki, P. Pflug, \emph{Invariant distances and metrics in complex analysis, Second extended edition,} De Gruyter Expositions in Mathematics, \textbf{9,} Walter de Gruyter,  Berlin (2013)
 \bibitem{KU}
 H.  Kaptanoglu, A. Ureyen, \emph{ Singular integral operators with Bergman-Besov kernels on the ball,}
Integral Equations Operator Theory, \textbf{91}, No. 30 (2019)
\bibitem{Kor}
A. Kor\'anyi, \emph{The Poisson integral for generalized half-planes and bounded symmetric domains,} Ann. Math. (2), \textbf{82}, 332-350 (1965)

\bibitem{Loo}
O. Loos, \emph{Jordan Pairs,} Lecture Notes in Math., \textbf{460,} Springer (1975)
\bibitem{MP}
S. Miihkinen, J. Pau; A. Per\"al\"a,  M. Wang,  \emph{ Volterra type integration operators from Bergman spaces to Hardy spaces}, J. Funct. Anal. \textbf{279}, 32 pp  (2020)
\bibitem{PaP}
J, Pau, Jordi, A. Perala, \emph{ A Toeplitz-type operator on Hardy spaces in the unit ball,} Trans. Amer. Math. Soc.,\textbf{373}, 3031-3062 (2020)
 \bibitem{Rin}
J. Ringrose, \emph{ Compact non-self-adjoint operators,} Van Nostrand Reinhold Co., London  (1971)
 \bibitem{Say}
 I. Satake,  \emph{Algebraic structures of symmetric domains,} Iwanami Shoten, Tokyo, and Princeton Univ. Press, Princeton, NJ, (1980)
 \bibitem{Up}
 H. Upmeier, \emph{Toeplitz operators on bounded symmetric domains,} Trans. Amer. Math. SOC. \textbf{280}, 221-237  (1983)
 \bibitem{UpW}
 H. Upmeier, K. Wang, \emph{Dixmier trace for Toeplitz operators on symmetric domains,} J. Funct. Anal., \textbf{271,} 532-565 (2016)
%\bibitem{Kr}
%A. Krasnoselskii, \emph{On a theorem of M. Riesz,}  Soviet Math. Dokl. \textbf{1} 229-231 (1960)
%\bibitem{KV}
%D. Kalaj, D. Vujadinovi\'c, \emph{Norm of the Bergman projection onto the Bloch space,} J. Operator Theory, \textbf{73} 113-126 (2017)
%\bibitem{LS}
%L. Lanzani, M. Stein,  \emph{The Bergman projection in $L^p$ for domains with minimal smoothness,} Illinois J. Math. \textbf{56}, 127-154 (2013)
%\bibitem{Li}
%H. Lieb, \emph{Sharp constants in the Hardy-Littlewood-Sobolev and related inequalities,} Ann. Math. \textbf{118}, 349-374 (1983)
%\bibitem{Mc}
%D. Mcneal,   \emph{The Bergman projection as a singular integral operator,} J. Geom. Anal. \textbf{1}, 91-103 (1994)
%\bibitem{MS}
%D. Mcneal,  M. Stein, \emph{Mapping properties of the Bergman projection on convex domains of finite type,} Duke Math. J. \textbf{73}, 177-199 (1994)
%\bibitem{NR}
%A. Nowak,L. Roncal, \emph{Potential operators associated with Jacobi and Fourier-Bessel expansions,} J. Math. Anal. Appl. \textbf{422} , 148-184 (2015)
%\bibitem{OC}
%C. Ouyang, W. Yang,  \emph{Exact location of $\alpha$-Bloch spaces in $L_a^p$ and $H^p$ of a complex unit ball,} Rocky Mountain J. Math. \textbf{30}, 1151-1169 (2000)
%\bibitem{PS}
%H. Phong, M. Stein,  \emph{Estimates for the Bergman and Szeg\"o projections on strongly pseudo-convex domains}, Duke Math. J. \textbf{44}, 695-704 (1977)
%\bibitem{PSW}
%P. Rosay, M. Stein,  S. Wainger, \emph{Estimates for the Bergman and Szeg\"o kernels in $C^2$,} Ann. of Math. \textbf{129}, 113-149. (1989)
%\bibitem{Pl}
% N. Plessis, \emph{Some theorems about the Riesz fractional integral,} Trans. Amer. Math. Soc. \textbf{80}, 124-134 (1955)
%\bibitem{Rud}
%W. Rudin, \emph{Function Theory in the Unit Ball of $\mathbb{C}^n$,} Grundlehren der Math.  Springer, New York, 1980
%\bibitem{Rud2}
%W. Rudin, \emph{Functionl Analysis, Second Edition,} McGraw-Hill, 1991.
%\bibitem{SW}
%%M. Stein, G. Weiss,  \emph{ Fractional Integrals on n-Dimensional Euclidean Space,} J. Math. Mech. \textbf{7}, 503-514 (1958)
%\bibitem{Tao}
%T. Tao, \emph{Harmonic Analysis,} Lecture notes at UCLA, http://www.math.ucla.edu/$\sim$tao/ 247a.1.06f/notes2.pdf
%\bibitem{Zha}
%R.  Zhao, \emph{Generalization of Schur's test and its application to a class of integral operators on the unit ball of $\mathbb{C}^n$,} Integr. Equ. Oper. Theory, \textbf{82}, 519-532 (2015)
\bibitem{Yan}
Z. Yan,\emph{ A class of generalized hypergeometric functions in several variables,} Canad. J. Math., \textbf{44,} 1317-1338 (1992)
\bibitem{Zha}
R. Zhao, \emph{Generalization of Schur's test and its application to a class of integral operators on the unit ball of $\mathbb{C}^n$,} Integral Equations Operator Theory, \textbf{82,} 519-532 (2015)
\bibitem{ZZ}
R. Zhao, K. Zhu, \emph{Theory of Bergman Spaces in the Unit Ball of $\mathbb{C}^n$,} M\'em. Soc. Math. Fr.  \textbf{115} (2009)
\bibitem{Zhu1}
K. Zhu, \emph{Holomorphic Besov spaces on bounded symmetric domains,} Quart. J. Math. Oxford, \textbf{46,} 239-256 (1995)
\bibitem{Zhu2}
K. Zhu, \emph{Holomorphic Besov spaces on bounded symmetric domains. II,} Indiana Univ. Math. J., \textbf{44,} 1017-1031 (1995)
\bibitem{Zhu3}
K. Zhu, \emph{ Spaces of Holomorphic Functions in the Unit Ball}, Graduate Texts in Mathematics, \textbf{226}, Springer-Verlag, New York (2005)
\bibitem{Zhu4}
K. Zhu, \emph{Operator Theory in Function Spaces. Second Edition}, Mathematical Surveys and Monographs, 138, American Mathematical Society, Providence, RI ( 2007)
\end{thebibliography}
 
\end{document}